\documentclass[11pt,letterpaper]{amsart}

\usepackage{xcolor}
\usepackage{amsmath}
\usepackage{amssymb}
\usepackage{enumitem}
\usepackage{tikz,float}
\usetikzlibrary{cd,patterns}

\usepackage[bb=ams,scr=euler]{mathalpha}

\usepackage{hyperref}

\usepackage{thmtools}
\declaretheoremstyle[headfont=\normalsize\normalfont\bfseries,notefont=\mdseries,
notebraces={(}{)},bodyfont=\normalfont\itshape,postheadspace=0.5em, spaceabove=3mm, spacebelow=2mm]{italstyle}
\declaretheorem[style=italstyle,name=Theorem,numberwithin=section]{theorem}
\declaretheorem[style=italstyle,name=Corollary,sibling=theorem]{corollary}
\declaretheorem[style=italstyle,name=Claim,sibling=theorem]{claim}
\declaretheorem[style=italstyle,name=Proposition,sibling=theorem]{prop}
\declaretheorem[style=italstyle,name=Lemma,sibling=theorem]{lemma}


\makeatletter
%
\newcommand{\abs}[1]{|#1|}
\newcommand{\bd}{\partial}
\newcommand{\C}{\mathbb{C}}
\renewcommand{\d}{\mathrm{d}}
\newcommand{\id}{\mathrm{id}}
\newcommand{\ip}[1]{\left\langle#1\right\rangle}
\newcommand{\pd}[2]{\frac{\partial #1}{\partial #2}}
\newcommand{\R}{\mathbb{R}}
\newcommand{\Z}{\mathbb{Z}}

\def\@secnumfont{\bfseries}
\renewcommand\section{\@startsection{section}{1}{0pt}{-3.5ex \@plus -1ex \@minus -.2ex}{2.3ex \@plus.2ex}{\centering\itshape}}
\newcommand{\set}[1]{\{#1\}}
\renewcommand{\subsection}{\@startsection{subsection}{2}%
  \z@{.5\linespacing\@plus.7\linespacing}{-.5em}%
  {\normalfont\itshape}}
\renewcommand{\paragraph}{\@startsection{paragraph}{4}%
  \z@{-.3em}\z@{\normalfont\itshape}}

\def\l@paragraph{\@tocline{4}{0pt}{1pc}{7pc}{}}
\makeatother

\parindent=0pt
\parskip=4pt

\title{The spectral diameter of a symplectic ellipsoid}
\author{Habib Alizadeh}
\author{Marcelo S. Atallah}
\author{Dylan Cant}

\setcounter{tocdepth}{3}
\setcounter{secnumdepth}{4}


\begin{document}
\maketitle

\begin{abstract}
  The spectral diameter of a symplectic ball is shown to be equal to its capacity; this result upgrades the known bound by a factor of two and yields a simple formula for the spectral diameter of a symplectic ellipsoid. We also study the relationship between the spectral diameter and packings by two balls.
\end{abstract}

\section{Introduction}
\label{sec:introduction}

\subsection{Spectral diameter as a capacity}
\label{sec:diameter-as-a-capacity}

A well-known construction in Floer theory associates a \emph{spectral invariant} to a compactly supported Hamiltonian system $\varphi_{t}$ on a convex-at-infinity symplectic manifold $W$. The sum of the spectral invariants of $\varphi_{t}$ and its inverse is called the \emph{spectral norm} $\gamma(\varphi_{t})$. For an open set $U\subset W$ one can therefore consider the \emph{spectral diameter}:
\begin{equation*}
  \gamma(U)=\sup\set{\gamma(\varphi_{t}):\varphi_{t}\text{ is supported in }U}.
\end{equation*}
Such a quantity is a symplectic capacity for $U$ in the sense of, e.g., \cite{cieliebak-hofer-latschev-schlenk}, and has been considered in \cite{schwarz_spectral_invariants,frauenfelder-schlenk,mailhot-preprint}; for further discussion see \S\ref{sec:spectral-invariants}. Our main result is the exact formula for the spectral diameter of a symplectic ellipsoid in $W=\C^{n}$:
\begin{theorem}\label{theorem:main}
  The spectral diameter of the ellipsoid: $$E(a_{1},\dots,a_{n})=\textstyle\set{z:\sum \pi a_{i}^{-1}\abs{z_{i}}^{2}<1},$$ with  $a_{1}\le \dots\le a_{n}$, is equal to:
  \begin{equation}
    \gamma(E(a_{1},\dots,a_{n}))=\left\{
      \begin{aligned}
        &a_{n}&&\text{ if }a_{n}\in [a_{1},2a_{1}],\\
        &2a_{1}&&\text{ if }a_{n}\in [2a_{1},\infty);
      \end{aligned}
    \right.
  \end{equation}
  in particular, $\gamma(B(1))=1$ and $\gamma(Z(1))=2$.
\end{theorem}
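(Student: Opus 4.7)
The plan is to reduce the theorem to the sharp ball bound $\gamma(B(a))=a$ (and its cylinder analogue $\gamma(Z(a))=2a$) via monotonicity of the capacity, and then to provide matching lower bounds through an explicit Hamiltonian construction. Set $\mu(z)=\sum \pi \abs{z_{i}}^{2}/a_{i}$, so that $E=\set{\mu<1}$.

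For the upper bound, the hypothesis $a_{1}\le\dots\le a_{n}$ furnishes two useful set-theoretic inclusions: $E(a_{1},\dots,a_{n})\subset B(a_{n})$, because $\pi\sum\abs{z_{i}}^{2}\le a_{n}\mu<a_{n}$, and $E(a_{1},\dots,a_{n})\subset Z(a_{1})$, because $\pi\abs{z_{1}}^{2}\le a_{1}\mu<a_{1}$. Both are symplectic embeddings in the standard $\C^{n}$, so monotonicity of $\gamma$ applied to these open inclusions gives $\gamma(E)\le\min(\gamma(B(a_{n})),\gamma(Z(a_{1})))=\min(a_{n},2a_{1})$, which matches the claimed formula in each regime.

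For the lower bound, my plan is to take a rotation-invariant autonomous Hamiltonian $H=f(\mu)$ with $f\colon[0,1)\to\R$ compactly supported. The non-constant 1-periodic orbits lie on the coordinate axes: on the $z_{j}$-axis at level $\mu_{0}$ with $f'(\mu_{0})=ka_{j}$ ($k\in\Z\setminus\set{0}$), winding $k$ times, with action $ka_{j}\mu_{0}-f(\mu_{0})$. Engineering $f$ so that $f'$ attains the value $a_{n}$ near $\mu=1$ (when $a_{n}\le 2a_{1}$) yields a singly-wound $z_{n}$-orbit of action approaching $a_{n}$, while engineering $f'$ to attain $2a_{1}$ near $\mu=1$ (when $a_{n}\ge 2a_{1}$) yields a doubly-wound $z_{1}$-orbit of action approaching $2a_{1}$. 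A Conley--Zehnder index computation phrased in terms of the ratios $a_{n}/a_{i}$ or $2a_{1}/a_{i}$ should confirm that the relevant orbit contributes to the spectral invariant for the point class of $\C^{n}$. The threshold $a_{n}=2a_{1}$ is precisely the value at which a singly-wound $z_{n}$-orbit is overtaken by a doubly-wound $z_{1}$-orbit as the optimal contributor, explaining the case split.

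The main obstacle is the sharp ball bound $\gamma(B(a))\le a$, which improves the previously known bound by a factor of two; I expect this to rest on a persistence-module (bar-code) analysis of the Floer complex of a rotation-invariant Hamiltonian on $B(a)$, exploiting the $S^{1}$-symmetry to match cancellations tightly. For the ellipsoid lower bound, the parallel subtle point is verifying that the desired orbit actually realizes a spectral invariant rather than being paired away in the Floer complex; once again the rotation invariance of $H$ should let one write the relevant bar code explicitly and read the required spectral invariant off directly.
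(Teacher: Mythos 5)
Your upper-bound reduction (monotonicity into $B(a_{n})$ and $Z(a_{1})$, with the cylinder handled by displacement energy) is exactly the paper's route, and you correctly identify $\gamma(B(a))\le a$ as the crux. However, there are two genuine gaps.

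First, the lower bound. Your mechanism --- a radial Hamiltonian $f(\mu)$ with a single distinguished orbit of action approaching $\min(a_{n},2a_{1})$ realizing ``the'' spectral invariant --- cannot work, because the spectral diameter is the \emph{sum} $c(\psi_{t};1)+c(\psi_{t}^{-1};1)$ and each summand is individually capped at $a_{1}$. Indeed, $E(a_{1},\dots,a_{n})\subset Z(a_{1})$, and the very displacement-energy argument you invoke for the upper bound shows $c(\varphi_{t};1)\le e_{\gamma}(Z(a_{1}))\le a_{1}$ for \emph{every} $\varphi_{t}$ supported in the ellipsoid (Proposition \ref{prop:displ-energy-bound} applied to the translation displacing the cylinder), and likewise for $\varphi_{t}^{-1}$. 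So no single orbit of action $2a_{1}$ (or $a_{n}>a_{1}$) can be a spectral invariant; one must make \emph{both} summands close to $a_{1}$ simultaneously. The paper achieves this with a non-radial Hamiltonian: a positive bump on one embedded ball and a negative bump on a second disjoint ball (Theorem \ref{theorem:packings-by-two-balls}), the two disjoint balls of capacity $\approx a_{1}$ inside $E(1,\dots,2)$ being produced by cutting the toric moment image into affine-equivalent simplices (\S\ref{sec:toric-geom-pack}); the case $a_{n}\in[a_{1},2a_{1}]$ is then obtained from $E(1,\dots,2)$ by rescaling and monotonicity, not by a direct orbit count.

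Second, the ball bound $\gamma(B(1))\le 1$ is left as an expectation, and the proposed route is conceptually off: you suggest a bar-code analysis of ``a rotation-invariant Hamiltonian on $B(a)$,'' but the upper bound must hold for \emph{all} compactly supported systems in $B(a)$, not just $S^{1}$-invariant ones, so computing the persistence module of any particular (or any symmetric class of) Hamiltonians cannot close the argument. The paper's actual mechanism (\S\ref{sec:spectr-diam-ball}) is different: it uses the naturality (Seidel-type) transformation associated to the rotation loop $\phi_{t}(z)=e^{2\pi it}z$ of the \emph{ambient} $\C^{n}$, which is action-preserving and exchanges the unit and point classes, combined with the duality $c(R_{-st}\psi_{t}^{-1};1)=-c(\psi_{t}R_{st};\mathrm{pt})$ and subadditivity of spectral invariants, to trade $c(\psi_{t}^{-1};1)$ for a computable quantity attached to $\phi_{t}R_{-2st}$. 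Without this (or an equivalent) idea, the factor-of-two loss from the displacement energy bound ($\gamma(B(1))\le 2e_{\gamma}(B(1))=2$, cf.\ Theorem \ref{theorem:displacement_energy}) is not overcome, and the case $a_{n}\in[a_{1},2a_{1}]$ of the theorem remains unproved.
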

To the authors' knowledge, the equality $\gamma(B(1))=1$ has so far not appeared in the literature. The inequality $\gamma(Z(1))\le 2$ follows from a displacement energy bound and has been observed before; see \S\ref{sec:outline-argument} and \S\ref{sec:displ-energy-bound}.

\subsection{Outline of argument}
\label{sec:outline-argument}

The argument proving Theorem \ref{theorem:main} is divided into three main steps:

\begin{enumerate}
\item \emph{The case of a cylinder}: $\gamma(E(1,\infty,\dots))\le 2$. This step is proved using a well-known upper bound on the spectral diameter in terms of the displacement energy; see \S\ref{sec:displ-energy-bound} and \S\ref{sec:cyl-upper-bound}.
\item \emph{The case of a long ellipsoid}: $\gamma(E(1,\dots,2))\ge 2$. This lower bound uses the standard moment map $\R^{2n}\to [0,\infty)^{n}$ and toric geometry. Briefly, one shows that $E(1,\dots,2)$ contains two balls of capacity $1$, and then explicitly constructs systems supported in these two balls to obtain the stated lower bound; see \S\ref{sec:toric-geom-pack} and \S\ref{sec:spectr-diam-pack}.
\item \emph{The case of a ball}: $\gamma(B(1))=1$. This step is the most delicate; the argument relies on the Hamiltonian circle action on $\C^{n}$ which rotates all the coordinates, and analyzing the effect on the action filtration of Floer homology groups. The proof is given in \S\ref{sec:spectr-diam-ball}.
\end{enumerate}

Assuming (1), (2), and (3), we prove the theorem. It is well-known that:
\begin{equation*}
  \gamma(\sqrt{a}U)=a\gamma(U)
\end{equation*}
for open sets $U\subset \R^{2n}$; see, e.g., \cite[\S2]{cieliebak-hofer-latschev-schlenk}. Therefore, using (1) and (2), if $a_{n}\in [2a_{1},\infty)$, we have:
\begin{equation*}
  \gamma(E(a_{1},\dots,a_{n}))=a_{1}\gamma(E(1,\dots,a_{n}/a_{1}))=2a_{1},
\end{equation*}
because $E(1,\dots,2)\subset E(1,\dots,a_{n}/a_{1})\subset E(1,\infty,\dots)$. On the other hand, if $a_{n}\in [a_{1},2a_{1}]$, then a similar argument yields:
\begin{equation*}
  \gamma(E(a_{1},\dots,a_{n}))=\frac{a_{n}}{2}\gamma(E(2a_{1}/a_{n},\dots,2))\ge \frac{a_{n}}{2}\gamma(E(1,\dots,2))=a_{n},
\end{equation*}
while (3) implies:
\begin{equation*}
  \gamma(E(a_{1},\dots,a_{n}))\le \gamma(E(a_{n},\dots,a_{n}))=a_{n},
\end{equation*}
so $\gamma(E(a_{1},\dots,a_{n}))=a_{n}$ for $a_{n}\in [a_{1},2a_{1}]$. This is what we wanted to show.

\subsection{The spectral diameter and packings by two balls}
\label{sec:spectr-diam-pack-1}

Part of the argument used in the proof of Theorem \ref{theorem:main} involves the additivity of the spectral diameter with respect to packings by two balls. We state this result as it is of independent interest:
\begin{theorem}\label{theorem:packings-by-two-balls}
  Let $U$ be an open set in an aspherical and convex-at-infinity symplectic manifold $W$. Suppose there exists a symplectic embedding:
  \begin{equation*}
    B(a_{1})\sqcup B(a_{2})\to U;
  \end{equation*}
  then $a_{1}+a_{2}\le \gamma(U)$.
\end{theorem}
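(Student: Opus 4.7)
The plan is to exhibit, for every $\epsilon>0$, a compactly supported Hamiltonian $K_\epsilon$ with $\mathrm{supp}(K_\epsilon)\subset U$ and $\gamma(K_\epsilon)\ge a_1+a_2-\epsilon$; this suffices by the definition of $\gamma(U)$. Using the hypothesized symplectic embedding, identify $B(a_1)\sqcup B(a_2)$ with its image in $U$, and on each ball $B(a_i)$ pick an autonomous radial Hamiltonian $H_i=f_i(\pi|z|^2)$ whose profile $f_i\colon[0,a_i]\to\R$ is tuned so that $f_i$ vanishes near $0$ and $a_i$ and so that its only non-constant 1-periodic orbits lie on a single critical circle, arbitrarily close to $\partial B(a_i)$, with Hamiltonian action $a_i-\epsilon/2$. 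Such profiles are standard in the Floer-theoretic capacity computations for balls, and the actions are read off from $f_i(s)-s f_i'(s)$ at the critical slope.

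Set $K_\epsilon:=H_1-H_2$. Since the two supports are disjoint, $K_\epsilon$ is smooth and compactly supported in $U$, and its 1-periodic orbits split into three families: the non-constant orbits of $H_1$ in $B(a_1)$, with action $+(a_1-\epsilon/2)$; the non-constant orbits of $-H_2$ in $B(a_2)$, with action $-(a_2-\epsilon/2)$; and the constant orbits on $W\setminus(B(a_1)\cup B(a_2))$, with action $0$. The spectral norm $\gamma(K_\epsilon)$ is the difference of two Floer-theoretic spectral invariants, one picking out an orbit of maximal action via the fundamental class and one picking out an orbit of minimal action via the point class. The plan is to show these min-max values must land on the peak orbits in the two balls, yielding invariants $+(a_1-\epsilon/2)$ and $-(a_2-\epsilon/2)$, respectively, whence $\gamma(K_\epsilon)\ge a_1+a_2-\epsilon$.

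The main obstacle is precisely this last step: one must rule out that the min-max values collapse onto the abundant zero-action constant orbits on the complement of the two balls. My plan is to leverage the disjointness of supports at the chain level. After a small Morse perturbation of the constants, the Floer complex of $K_\epsilon$ should split as a direct sum of three subcomplexes, one per region, so that the spectral invariants can be computed on the relevant subcomplex without interference from the others; asphericity of $W$ is used throughout to keep the action functional single-valued and the continuation moduli spaces free of sphere bubbling. If this chain-level decomposition proves delicate to establish directly, a fallback is to run monotone Floer continuation between $K_\epsilon$, $H_1$, and $-H_2$ separately, and then combine the one-sided bounds so obtained using the disjoint-support hypothesis.
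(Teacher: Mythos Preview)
Your setup is essentially the same as the paper's: plant a positive bump in one ball and a negative bump in the other, so that the action spectrum is a three-element set $\{0,\ +(a_1-\epsilon/2),\ -(a_2-\epsilon/2)\}$ (up to a normalization shift). The gap is in the step you yourself flag as the main obstacle.

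The chain-level splitting you propose does not hold. Disjointness of the Hamiltonian support does \emph{not} force the Floer complex to split: Floer cylinders are solutions of a PDE on all of $W$ and can, and typically do, travel from an orbit in $B(a_1)$ through the complement into $B(a_2)$. (Indeed, if such splittings held in general, the additivity would extend to three or more balls, which the paper points out is false: a cylinder in $\C^n$ contains infinitely many disjoint unit balls yet has bounded spectral diameter.) Your fallback of ``monotone continuation between $K_\epsilon$, $H_1$, and $-H_2$'' is too vague to close the gap; a continuation from $K_\epsilon$ to $H_1$, for instance, is not monotone (the $-H_2$ piece must go up), so one only gets one-sided inequalities of the wrong sign.

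The paper avoids any chain-level analysis by an elementary trick you are missing. Build the bump in $B(a_1)$ with a free parameter $\eta\in(0,1)$ so that its peak action is $\eta a_1$, and similarly $\mu b$ in $B(a_2)$. By spectrality and asphericity, $\gamma(\psi_t)$ must lie in the finite set $\{\eta a_1,\ \mu a_2,\ \eta a_1+\mu a_2\}$ (up to a small $\delta$). Now use \emph{continuity} of the spectral invariants in $(\eta,\mu)$: if $\gamma(\psi_t)=\eta a_1$ for some $(\eta,\mu)$, it equals $\eta a_1$ for all, and sending $\eta\to 0$ produces a non-identity system with $\gamma=0$, contradicting non-degeneracy of the spectral norm. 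The case $\gamma=\mu a_2$ is ruled out symmetrically, forcing $\gamma=\eta a_1+\mu a_2$; then let $\eta,\mu\to 1$. This continuity-plus-non-degeneracy argument is the key idea and replaces any attempt to control Floer trajectories directly.
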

Such additivity does not hold for packings by three or more balls; for example, the cylinder in $\C^{n}$ contains infinitely many disjoint balls with a given capacity but has a bounded spectral diameter.

Theorem \ref{theorem:packings-by-two-balls} also enables one to conclude:
\begin{corollary}\label{cor:spec_diam_pol_disks}
  The spectral diameter of a polydisk: $$P(a_{1},\dots,a_{n})=D(a_{1})\times \dots \times D(a_{n}),$$ with $a_{1}\le \dots \le a_{n}$, is equal to $2a_{1}$ if $n\ge 2$; if $n=1$ it is equal to $a_{1}$.
\end{corollary}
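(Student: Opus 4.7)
The case $n = 1$ is immediate: since $P(a_1) = D(a_1)$ coincides with the ball $B(a_1) \subset \C$, Theorem~\ref{theorem:main} gives $\gamma(P(a_1)) = a_1$. For $n \ge 2$, the inclusion $P(a_1, \dots, a_n) \subset D(a_1) \times \C^{n-1} = E(a_1, \infty, \dots, \infty)$ together with monotonicity of $\gamma$ and the cylinder case of Theorem~\ref{theorem:main} yields the upper bound $\gamma(P(a_1, \dots, a_n)) \le 2 a_1$.

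For the matching lower bound ($n \ge 2$), the plan is to apply Theorem~\ref{theorem:packings-by-two-balls}: it suffices to produce, for every $c < a_1$, a symplectic embedding $B(c) \sqcup B(c) \hookrightarrow P(a_1, \dots, a_n)$. The key reduction is to a four-dimensional packing via stabilization. Given any two symplectic embeddings $\psi, \phi : B^4(c) \to P(a_1, a_2)$ with disjoint images, I would set
\[ \Psi(z_1, \dots, z_n) := (\psi(z_1, z_2), z_3, \dots, z_n) \]
and define $\Phi$ analogously. Because $B^{2n}(c) \subset B^4(c) \times D(a_3) \times \dots \times D(a_n)$---the first factor since $|z_1|^2 + |z_2|^2 \le \sum_i |z_i|^2 < c/\pi$, the later ones since $|z_i|^2 < c/\pi \le a_i/\pi$---both $\Psi$ and $\Phi$ are symplectic embeddings of $B^{2n}(c)$ into $P(a_1, \dots, a_n)$; their images are disjoint, since any shared point $(w_1, w_2, z_3, \dots, z_n)$ would have its first two coordinates lying in $\psi(B^4(c)) \cap \phi(B^4(c)) = \emptyset$.

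It therefore remains to produce the four-dimensional packing $B^4(c) \sqcup B^4(c) \hookrightarrow P(a_1, a_2)$ for arbitrary $c < a_1 \le a_2$. When $a_2 \ge 2 a_1$, the chain of inclusions $E(a_1, 2 a_1) \subset P(a_1, 2 a_1) \subset P(a_1, a_2)$ combined with the two-ball packing of $E(a_1, 2 a_1)$ established in step~(2) of the proof of Theorem~\ref{theorem:main} furnishes the embedding directly. In the remaining range $a_1 \le a_2 < 2 a_1$, one uses the inclusion $P(a_1, a_1) \subset P(a_1, a_2)$ and appeals to the full two-ball packing of the square polydisk $P(a_1, a_1)$---a classical four-dimensional result that descends from the McDuff--Polterovich--Biran full packing of $S^2(a_1) \times S^2(a_1)$ by two balls of capacity $a_1$, after arranging both balls to avoid the divisor at infinity so that they lie in $P(a_1, a_1)$. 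The main obstacle is precisely this four-dimensional packing: naive moment-polytope placements at opposite corners of the square $[0, a_1]^2$ produce torus bundles rather than embedded balls, so a genuinely non-toric symplectic embedding is required.
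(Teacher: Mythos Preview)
Your upper bound and the $n=1$ case match the paper. For the lower bound the paper also invokes Theorem~\ref{theorem:packings-by-two-balls}, but it produces the two-ball packing of the polydisk directly by the toric method of \S\ref{sec:toric-geom-pack}, not via McDuff--Polterovich--Biran, and not by first reducing to dimension four.

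The gap in your argument is precisely your dismissal of the toric approach. You are right that the preimage $\mu^{-1}(\Delta)$ of an open simplex $\Delta\subset\R^{n}_{>0}$ is a $T^{n}$-bundle and not a ball. But this is exactly the situation handled by Lemma~\ref{lemma:schlenk}: that torus bundle has Gromov width equal to the side length of $\Delta$, so it contains a symplectic ball of capacity arbitrarily close to $a_{1}$. In dimension two one cuts $(0,a_{1})^{2}$ into $\Delta(a_{1})$ and its image under $g(x,y)=(a_{1}-x,a_{1}-y)$, whose linear part $-I$ lies in $\mathrm{SL}_{2}(\Z)$; in higher dimensions one replaces $-I$ by $\mathrm{diag}(-1,-1,1,\dots,1)\in\mathrm{SL}_{n}(\Z)$ and translates in the first two coordinates (see Figure~\ref{fig:affine_trans_ba}). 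Since $G=\mathrm{SL}_{n}(\Z)\ltimes\R^{n}$ acts symplectically on $\R^{n}_{>0}\times T^{n}$, the preimage of $g(\Delta(a_{1}))$ also has Gromov width $a_{1}$ and is disjoint from that of $\Delta(a_{1})$. This yields the packing of $P(a_{1},\dots,a_{n})$ in one step.

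Your detour through $S^{2}(a_{1})\times S^{2}(a_{1})$ is not wrong in principle, but the step ``arrange both balls to avoid the divisor at infinity so that they lie in $P(a_{1},a_{1})$'' is itself the nontrivial content, and its standard proof is the very toric argument you set aside. So as written, that case is circular or at best incomplete.
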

The spectral diameter obstructs symplectic embeddings of $P(a,a)$ into $B(a')$ unless $2a\le a'$; when $n=2$, this embedding obstruction recovers the result proved in \cite{ekeland-hofer-2}. In higher dimensions the spectral diameter obstruction is weaker than the one obtained in \cite{ekeland-hofer-2}.

\subsection{A two-ball capacity}
\label{sec:two-ball-capacity}

Consider the capacity:
\begin{equation*}
  c_{2B}(U):=\sup\set{a+b:\text{ there exists a symplectic } B(a)\sqcup B(b)\subset U}.
\end{equation*}
Our method proves that $c_{2B}(U)=\gamma(U)$ whenever $U$ is a symplectic ellipsoid or polydisk. This begs the question: {\itshape what is the largest class $\mathscr{C}$ of domains $U$ for which $c_{2B}(U)=\gamma(U)$?} The construction of \cite{hermann-starshaped-preprint-1998} shows that the class $\mathscr{C}$ does not contain certain starshaped domains; it is based on the fact that arbitrarily small neighborhoods of the torus $\bd D(r)\times \dots \times \bd D(r)$ have a large spectral diameter (proportional to $r$) but with a small $c_{2B}$ capacity.

The equality $c_{2B}(B(1))=\gamma(B(1))=1$ implies Gromov's two-ball theorem \cite[0.3.B]{gromov-inventiones-1985}. Gromov's method yields $c_{2B}(\mathbb{C}P^{n})=1$; this fact can also be proved using spectral diameter. Indeed, it follows from \S\ref{sec:digr-non-asph} that:
\begin{equation*}
  c_{2B}(\mathbb{C}P^{n})\le \gamma(\mathbb{C}P^{n}).
\end{equation*}
When combined with the result of \cite{entov-poltero-IMRN-2003} that $\gamma(\mathbb{C}P^{n})\le 1$ this upper bound establishes the two-ball theorem for $\mathbb{C}P^{n}$.

\subsection{On the spectral displacement energy}
\label{sec:spectr-displ-energy}

The \emph{spectral displacement energy} of a precompact open set $U\subset W$ is the value:
\begin{equation*}
  e_{\gamma}(U)=\inf\{\gamma(\psi_t):\psi_1(U)\cap U=\emptyset\};
\end{equation*}
such a quantity was considered in \cite[\S4]{viterbo92-GF}. Combining the displacement energy bound of \S\ref{sec:displ-energy-bound} and the existence of a certain compactly supported Hamiltonian system displacing $B(1)$ from itself it is possible to prove the following:
\begin{theorem}\label{theorem:displacement_energy}
The spectral displacement energy $e_{\gamma}(B(1))$ is equal to 1.
\end{theorem}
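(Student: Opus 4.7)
The plan is to prove the two inequalities $e_\gamma(B(1)) \le 1$ and $e_\gamma(B(1)) \ge 1$ separately.

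For the upper bound, I would use a classical construction of a compactly supported Hamiltonian isotopy $\psi_t$ displacing $B(1)$ with Hofer norm arbitrarily close to $1$ — for instance, a cut-off rotation by angle $\pi$ about a point whose distance to the origin slightly exceeds the radius of $B(1)$, so that the rotated ball is disjoint from $B(1)$. Since the spectral norm is bounded above by the Hofer norm, this gives $\gamma(\psi) \le 1 + \epsilon$, and taking $\epsilon \downarrow 0$ yields $e_\gamma(B(1)) \le 1$.

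For the lower bound, let $\psi$ be any compactly supported Hamiltonian with $\psi_1(B(1))\cap B(1) = \emptyset$. The main step is to refine the displacement energy bound of \S\ref{sec:displ-energy-bound} to the one-sided form
\begin{equation*}
  c^+(\varphi) \le \gamma(\psi) \quad \text{for every } \varphi \text{ supported in } B(1).
\end{equation*}
This follows from two ingredients. First, because $\psi$ displaces $B(1)$, the fixed points of $\varphi\psi$ are precisely the fixed points of $\psi$ (all lying outside $B(1)$), with identical actions, so $c^+(\varphi\psi) = c^+(\psi)$. Second, the triangle inequality for spectral invariants applied to the factorization $\varphi = (\varphi\psi)\cdot\psi^{-1}$ gives
\begin{equation*}
  c^+(\varphi) \le c^+(\varphi\psi) + c^+(\psi^{-1}) = c^+(\psi) - c^-(\psi) = \gamma(\psi).
\end{equation*}

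The remaining input is to extract from the proof of $\gamma(B(1)) = 1$ in \S\ref{sec:spectr-diam-ball} a family of Hamiltonians $\varphi_\epsilon$ supported in $B(1)$ satisfying $c^+(\varphi_\epsilon) \ge 1 - \epsilon$. This should be possible because the truncated circle-action Hamiltonian used there can be chosen non-negative, which forces $c^-(\varphi_\epsilon) \ge 0$ and hence $c^+(\varphi_\epsilon) \ge \gamma(\varphi_\epsilon) \ge 1 - \epsilon$. Combining the two inputs yields $\gamma(\psi) \ge c^+(\varphi_\epsilon) \ge 1 - \epsilon$, and letting $\epsilon \downarrow 0$ gives $e_\gamma(B(1)) \ge 1$.

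The hard part is the one-sided strengthening: verifying that the argument of \S\ref{sec:spectr-diam-ball} proving $\gamma(B(1)) \ge 1$ can be run so that $c^+$ alone (rather than only the full spectral norm $\gamma = c^+ - c^-$) is close to $1$. This amounts to pinning down the sign of the relevant generator in the action filtration of Floer homology, which should follow from the non-negativity of the truncated rotation Hamiltonian exploited in that step.
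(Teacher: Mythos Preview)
Your overall architecture for the lower bound is the same as the paper's: the inequality $c(\varphi;1)\le \gamma(\psi)$ for $\psi$ displacing the support of $\varphi$ is exactly the ``moreover'' clause of Proposition~\ref{prop:displ-energy-bound}, and then one plugs in a $\varphi$ supported in $B(1)$ with $c(\varphi;1)$ close to~$1$. However, you have mislocated where the latter input comes from. The argument in \S\ref{sec:spectr-diam-ball} establishes the \emph{upper} bound $\gamma(B(1))\le 1$; the circle-action Hamiltonian there is not the system you want. What you actually need is a radial bump function $H=f(\pi|z|^2)$ with $f\ge 0$, $f(0)$ close to $1$, and support in $[0,1)$; for such a function the only spectral value above zero is approximately $f(0)$, so $c(\varphi;1)\approx 1$. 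This is the construction of \S\ref{sec:spectr-diam-pack} (with a single ball), and it is entirely elementary---no ``one-sided strengthening'' of \S\ref{sec:spectr-diam-ball} is needed.

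Your upper bound has a genuine gap: the cut-off rotation by angle $\pi$ about a point just outside $B(1)$ does \emph{not} have Hofer norm close to~$1$. The generator of rotation by $\pi$ about $c$ is $\tfrac12\pi|z-c|^2$, and its oscillation over the region swept out by the isotopy (an annulus about $c$ of width roughly the diameter of $B(1)$) is approximately $2$, not $1$. The paper instead uses the translation from \S\ref{sec:cyl-upper-bound}: identify $B(1)\subset Z(1)$ with a subset of $(0,1)^2\times\C^{n-1}$, and translate in the second coordinate by a cut-off of $H=x_1$, whose oscillation can be made $\le 1+\delta$. You should replace your rotation by this translation.
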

The construction of the system is recalled in \S\ref{sec:cyl-upper-bound}. This equality shows one does not obtain the spectral diameter $\gamma(B(1))=1$ directly from the displacement energy bound from \S\ref{sec:displ-energy-bound}. Indeed, the displacement energy bound yields:
\begin{equation*}
  \gamma(B(1))\leq2e_{\gamma}(B(1))=2,
\end{equation*}
which is suboptimal in view of Theorem \ref{theorem:main}.

\subsection{Spectral diameter of special balls in projective space}

Consider $\mathbb{C}P^{n}$ with the Fubini-Study symplectic form, normalized so that the class of $\mathbb{C}P^{1}$ has symplectic area $1$. In this case, it is well-known that the symplectic structure on $\mathbb{C}P^{n}$ is determined as the symplectic reduction of $\bd B(1)\subset \C^{n+1}$.

Let $a\in (0,1)$. The function $S_{a}=\pi \abs{z_{0}}^{2}-a$ is well-defined on $\mathbb{C}P^{n}$ via the quotient map $\bd B(1)\to \mathbb{C}P^{n}$, and generates a 1-periodic Hamiltonian circle action on $\mathbb{C}P^{n}$.

\begin{figure}[h]
  \centering
  \begin{tikzpicture}[scale=1.9]
    \draw (0,0)--(1,0);
    \draw (0,-0.3)node[left]{$-a$} --+(0,1) node[left]{$1-a$};
    \draw[line width=0.6pt] (0,-0.3)--(1,0.7);
    \draw[dotted,line width=0.7pt] (0,-0.3)--+(0,-0.2)node[below]{$\mathbb{C}P^{n-1}$} (1,0.7)--+(0,-1.2)node[below]{$\mathbb{C}P^{0}$};
  \end{tikzpicture}
  \caption{Schematic illustration of $S_{a}$, graphed as a function of $\pi\abs{z_{0}}^{2}$. The point $\mathbb{C}P^{0}$ is the maximum of $S_{a}$ and corresponds to the line $z_{1}=\dots=z_{n}=0$. The divisor $\mathbb{C}P^{n-1}$ is the Morse-Bott minimum of $S_{a}$ and represents all lines passing through the hyperplane $z_{0}=0$.}
  \label{fig:schematic-Sa}
\end{figure}
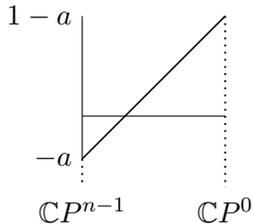

The set $\set{S_{a}\ge 0}$ is symplectomorphic to a ball of capacity $1-a$, a fact whose verification is left to the reader. Let us call a ball in $\mathbb{C}P^{n}$ \emph{special} if it is obtained by applying a Hamiltonian diffeomorphism to $\set{S_{a}\ge 0}$, for some $a\in (0,1)$. Beyond dimensions $n=1,2$, it does not seem to be known whether the image of every embedding of a closed ball into $\mathbb{C}P^{n}$ is special.

Using methods similar to our proof of Theorem \ref{theorem:main}, we prove:
\begin{theorem}\label{theorem:A}
  The spectral diameter of a special ball is equal to its capacity.
\end{theorem}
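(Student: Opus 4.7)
The plan is to establish both inequalities $\gamma(\{S_a \ge 0\}) \le 1-a$ and $\gamma(\{S_a \ge 0\}) \ge 1-a$; by the Hamiltonian invariance of the spectral diameter, this suffices for every special ball.

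For the upper bound, the strategy closely mirrors step (3) of \S\ref{sec:outline-argument}. In the Euclidean case, the proof that $\gamma(B(1)) \le 1$ given in \S\ref{sec:spectr-diam-ball} is driven by the Hamiltonian circle action on $\C^{n}$ generated by $\pi\sum \abs{z_i}^{2}$, whose variation on $B(1)$ equals $1$. On $\mathbb{C}P^n$ the analogous circle action is the one generated by $S_a$ itself, whose variation on the special ball $\{S_a \ge 0\}$ is exactly $1-a$ (the maximum value $1-a$ is attained at $\mathbb{C}P^{0}$ and the minimum value $0$ on the boundary). I would rerun the argument from \S\ref{sec:spectr-diam-ball} with this replacement: view $\exp(tS_a)$ as a loop in $\mathrm{Ham}(\mathbb{C}P^n)$, examine how it shifts the action filtration on Floer homology via a Seidel-type shift, and apply the resulting comparison to any Hamiltonian supported in $\{S_a \ge 0\}$.

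For the lower bound, I would use that $\{S_a \ge 0\}$ is symplectomorphic to the open ball $B(1-a)$, so by Gromov's two-ball theorem (equivalently, the equality $c_{2B}(B(1)) = 1$) one can symplectically embed $B(a_1)\sqcup B(a_2)$ into $\{S_a \ge 0\}$ for any $a_1 + a_2 < 1-a$. The aspherical hypothesis in Theorem \ref{theorem:packings-by-two-balls} prevents its direct application, but the non-aspherical variant of the additivity for two-ball packings developed in \S\ref{sec:digr-non-asph} (the same one used in the excerpt to prove $c_{2B}(\mathbb{C}P^{n})\le \gamma(\mathbb{C}P^{n})$) yields $\gamma(\{S_a \ge 0\}) \ge a_1 + a_2$, and letting $a_1 + a_2 \to 1-a$ completes the bound.

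The main obstacle is expected to be the upper bound: the PSS/continuation arguments on $\mathbb{C}P^n$ are more delicate than on $\C^{n}$ because of the nontrivial quantum cohomology, and the action-filtered machinery from \S\ref{sec:spectr-diam-ball} must be combined with the precise Seidel shift of the loop $\exp(tS_a)$. One must verify that this shift is compatible with the maximum $1-a$ of $S_a$ on the support, so that the iteration/cancellation mechanism used for $\gamma(B(1))\le 1$ transposes without essential change to this projective setting.
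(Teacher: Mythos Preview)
Your overall plan is correct and matches the paper's: the lower bound comes from Proposition~\ref{prop:minimal-chern} (the non-aspherical two-ball estimate in \S\ref{sec:digr-non-asph}), and the upper bound comes from the naturality transformation associated to the Hamiltonian loop generated by $S_a$.

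Two points deserve sharpening. First, for the lower bound you do not need Gromov's two-ball theorem; that result is the \emph{obstruction} direction. What you need is the \emph{existence} of packings $B(a_1)\sqcup B(a_2)\hookrightarrow B(1-a)$ with $a_1+a_2$ close to $1-a$, which is elementary (take $a_2$ small, or use the toric construction of \S\ref{sec:toric-geom-pack}).

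Second, and more substantively, the upper bound does \emph{not} transpose from \S\ref{sec:spectr-diam-ball} ``without essential change.'' The $\C^n$ argument hinges on the Floer groups $\mathrm{HF}(R_{\pm st}\psi_t)$ being one-dimensional and on the duality/sub-additivity chain in Claim~\ref{clm:five_claims}; none of this has a direct analogue on $\mathbb{C}P^n$, where there is no Reeb flow at infinity and the Floer homology carries the full quantum cohomology. The paper instead proceeds as follows. The Seidel element of the loop $\exp(tS_a)$ is the hyperplane class, so naturality sends $[M]\mapsto\Gamma$ (and the inverse loop sends $[M]\mapsto[\mathrm{pt}]$); applying both yields the exact identity
\[
\gamma(\psi_t)+c(\varphi_t\psi_t;[M])-c(\varphi_t\psi_t;\Gamma)=1
\]
(Lemma~\ref{lemma:A2}). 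The genuine work is then Lemma~\ref{lemma:A3}, which shows $c(\varphi_t\psi_t;[M])-c(\varphi_t\psi_t;\Gamma)\ge a$ whenever $\psi_t$ is supported in $\{S_a>0\}$. This is proved by deforming $S_a$ to $K_a=\min\{S_a,0\}$ and then to a small function, while using that the minimal Chern number of $\mathbb{C}P^n$ is $n+1$ to rule out contributions from orbits near the divisor $\mathbb{C}P^{n-1}$ to the class $[M]$. So the ``Seidel shift'' you anticipate is correct, but the mechanism replacing the five claims of \S\ref{sec:spectr-diam-ball} is this index-constrained deformation, not an iteration/cancellation argument.
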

Here the spectral diameter is computed using the spectral invariants within $\mathbb{C}P^{n}$ and uses the coefficient field $\Z/2$.

A special ball can be parametrized by an embedding $i:B(a)\to \mathbb{C}P^{n}$. Thus any compactly supported Hamiltonian system $\psi_{t}$ on $B(a)$ can be ``pushed forward'' to $\mathbb{C}P^{n}$ by the formula $i\psi_{t}i^{-1}$, extended to the complement of the ball as the identity system. It is natural to wonder whether:
\begin{equation}\label{eq:equality-of-spectral-invariants}
  \gamma_{\C^{n}}(\psi_{t})=\gamma_{\mathbb{C}P^{n}}(i\psi_{t}i^{-1})
\end{equation}
holds for every system $\psi_{t}$. As we show in \S\ref{sec:comp-with-bound}, the equality \eqref{eq:equality-of-spectral-invariants} fails in general (our example requires $a$ to be close to $1$). This is noteworthy as it shows we cannot simply appeal to the known bound on the spectral diameter of $\mathbb{C}P^{n}$ from \cite{entov-poltero-IMRN-2003} to deduce $\gamma_{\C^{n}}(\psi_{t})\le 1$ for all $\psi_{t}$ supported in $B(1)$.

Interestingly enough, if \eqref{eq:equality-of-spectral-invariants} holds for even an arbitrarily small special ball, then Theorem \ref{theorem:A} can be used to recover Theorem \ref{theorem:main} for balls. Establishing sufficient conditions to ensure \eqref{eq:equality-of-spectral-invariants} in the presence of symplectic spheres seems to be a non-trivial task, even for small balls, and we save further study of \eqref{eq:equality-of-spectral-invariants} for future research.

\subsection{Acknowledgements}
\label{sec:acknowledgements}

The authors benefitted from valuable discussions with E.~Shelukhin and O.~Cornea. This research was undertaken at Universit\'e de Montr\'eal with funding from the Fondation Courtois, the ISM, the FRQNT, and the Fondation J.~Armand Bombardier.

\section{Proofs}
\label{sec:proofs}

\subsection{Floer homology in convex-at-infinity manifolds}
\label{sec:floer-cohom-convex}

This section is concerned with a recollection of various Floer theoretic objects used in this paper.

\subsubsection{Cappings}
\label{sec:cappings}

For each fixed point $x$ of the time-one map $\psi_{1}$, a \emph{representative capping} is a smooth map $u:[-1,1]\times \R/\Z\to W$ so that $u(-1,t)=x$ and $u(1,t)=\psi_{t}(x)$. Representative cappings are considered up to equivalence: the difference of two representative cappings forms a sphere and if this sphere has zero symplectic area then the representatives are declared to be equivalent. An equivalence class of representatives will be referred to as a \emph{capping}. Capped orbits are denoted as pairs $(x,u)$.

Requiring that $u(-1,t)=x$ has the following advantage:
\begin{lemma}
  If $u$ is a capping of $x$ then $\bar{u}(s,t)=\psi_{t}^{-1}(u(-s,t))$ is a capping of $x$ for the system $\psi_{t}^{-1}$, and the action of $\bar{u}$ is minus the action of $u$.\hfill$\square$
\end{lemma}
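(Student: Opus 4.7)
The plan is to verify the two assertions by direct calculation.

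For the boundary conditions, the formula $\bar u(s,t)=\psi_t^{-1}(u(-s,t))$ gives $\bar u(-1,t)=\psi_t^{-1}(\psi_t(x))=x$ and $\bar u(1,t)=\psi_t^{-1}(x)$, so $\bar u$ is a representative capping of the fixed point $x$ for the flow $\psi_t^{-1}$. The construction $u\mapsto \bar u$ merely reflects the $s$-coordinate and post-composes with a symplectomorphism, so it preserves the symplectic area of any sphere formed as the difference of two representative cappings; hence it descends to equivalence classes.

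For the action identity, I would use the standard convention $\mathcal{A}_{H}(x,u)=-\int u^{*}\omega+\int_{0}^{1}H_{t}(\psi_{t}(x))\,dt$ together with the fact that $\psi_{t}^{-1}$ is generated by $\bar H_{t}(y)=-H_{t}(\psi_{t}(y))$. This immediately simplifies the Hamiltonian term of $\mathcal{A}_{\bar H}(x,\bar u)$ to $\int_{0}^{1}\bar H_{t}(\psi_{t}^{-1}(x))\,dt=-\int_{0}^{1}H_{t}(x)\,dt$, so the task reduces to expressing $\int \bar u^{*}\omega$ in terms of $\int u^{*}\omega$.

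Differentiating the formula for $\bar u$ yields $\bar u_{s}=-d\psi_{t}^{-1}(u_{s})$ and $\bar u_{t}=X_{\bar H_{t}}(\bar u)+d\psi_{t}^{-1}(u_{t})$. Expanding $\omega(\bar u_{s},\bar u_{t})$ and invoking the symplectic invariance $(\psi_{t}^{-1})^{*}\omega=\omega$, the ``diagonal'' term $-\omega(d\psi_{t}^{-1}(u_{s}),d\psi_{t}^{-1}(u_{t}))$ collapses to $-\omega(u_{s},u_{t})$ evaluated at $u(-s,t)$ and integrates to $-\int u^{*}\omega$ after the change of variables $s\mapsto -s$. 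For the ``cross'' term, the identity $\omega(Y,X_{\bar H_{t}})=d\bar H_{t}(Y)$ combined with $\bar H_{t}\circ \psi_{t}^{-1}=-H_{t}$ turns the integrand into $\partial_{s}(H_{t}(u(-s,t)))$, which telescopes in $s$ to $\int_{0}^{1}[H_{t}(\psi_{t}(x))-H_{t}(x)]\,dt$. Adding these two contributions gives $\int \bar u^{*}\omega=-\int u^{*}\omega+\int_{0}^{1}[H_{t}(\psi_{t}(x))-H_{t}(x)]\,dt$, and substituting into the formula for $\mathcal{A}_{\bar H}(x,\bar u)$ yields $-\mathcal{A}_{H}(x,u)$ after the $\int_{0}^{1}H_{t}(x)\,dt$ pieces cancel.

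I expect the main obstacle to be the sign bookkeeping in the cross-term: the factor from the reflection $s\mapsto -s$, the sign in $\iota_{X_{\bar H_{t}}}\omega=-d\bar H_{t}$, and the identity $\bar H_{t}\circ\psi_{t}^{-1}=-H_{t}$ must conspire to produce the telescoping boundary contribution, which is exactly what cancels the $\int_{0}^{1}H_{t}(x)\,dt$ coming from the Hamiltonian term of $\mathcal{A}_{\bar H}(x,\bar u)$.
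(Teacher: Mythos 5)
Your computation is correct and is exactly the direct verification the paper leaves to the reader (the lemma is stated with no proof): the boundary conditions, the identity $\bar H_t\circ\psi_t^{-1}=-H_t$, and the resulting formula $\int\bar u^*\omega=-\int u^*\omega+\int_0^1[H_t(\psi_t(x))-H_t(x)]\,dt$ all check out and give $\mathscr{A}(\psi_t^{-1};x,\bar u)=-\mathscr{A}(\psi_t;x,u)$. One small slip: with the convention $\omega(Y,X_{\bar H_t})=d\bar H_t(Y)$ that you (correctly, given the paper's action functional) adopt, the cross-term integrand is $\partial_s\bigl(\bar H_t(\bar u(s,t))\bigr)=-\partial_s\bigl(H_t(u(-s,t))\bigr)$ rather than $+\partial_s\bigl(H_t(u(-s,t))\bigr)$; since $u(-s,t)$ runs from $\psi_t(x)$ at $s=-1$ to $x$ at $s=1$, it is this negative sign that produces the telescoped value $\int_0^1[H_t(\psi_t(x))-H_t(x)]\,dt$ you state, so your final formula and conclusion are unaffected.
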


\subsubsection{Action}
\label{sec:action}

To each capping one can associate an action:
\begin{equation*}
  \mathscr{A}(\psi_{t};x,u)=\int H_{t}(\psi_{t}(x))\d t-\int u^{*}\omega,
\end{equation*}
where $H_{t}$ is the normalized generator for a contact-at-infinity Hamiltonian system $\psi_{t}$. For simplicity we suppose that $W$ is connected, and we consider two classes of normalization in this paper:
\begin{enumerate}
\item If $W$ is open, and $Y_{0}$ is a chosen connected component of the ideal boundary of $W$, then a Hamiltonian function $H_{t}$ is normalized if it is one-homogeneous in the non-compact end corresponding to $Y_{0}$; see \cite{alizadeh-atallah-cant} for the definitions of \emph{ideal boundary} and \emph{one-homogeneous}.
\item If $W$ is closed, then a Hamiltonian function $H_{t}$ is normalized if the integral of $H_{t}\omega^{n}$ over $W$ vanishes for each $t$.
\end{enumerate}

In (1) different choices of $Y_{0}$ give different normalizations. The crucial properties are that the set of normalized Hamiltonians is a vector subspace and a constant normalized function is zero.

\subsubsection{The Floer homology vector space}
\label{sec:floer-chain-complex}

Let $\psi_{t}$ be a contact-at-infinity system and suppose that $\psi_{1}$ has non-degenerate fixed points.

Define $\mathrm{CF}(\psi_{t})$ to be the $\Z/2$-vector space of semi-infinite sums generated by capped orbits $(x,u)$ of $\psi_{t}$, requiring that $\mathscr{A}(\psi_{t};x,u)\ge L$ holds for only finitely many terms in the sum, for each $L$.

\subsubsection{The Floer homology differential}
\label{sec:floer-hom-diff}

The Floer differential depends on a choice of almost complex structure $J_{t}$, although different choices give isomorphic chain complexes. It is defined as usual in fixed point Floer homology; see, e.g., \cite{dostoglou_salamon,seidel-eq-pop}. The relevant moduli space $\mathscr{M}(\psi_{t},J_{t})$ is the space of twisted holomorphic curves:
\begin{equation*}
  \left\{
    \begin{aligned}
      &w:\C\to W,\\
      &\bd_{s}w+J_{t}(w)\bd_{t}w=0,\\
      &\psi_{1}(w(s,t+1))=w(s,t).
    \end{aligned}
  \right.
\end{equation*}
In order for the cylinder $u(s,t)=\psi_{t}(w(s,t))$ to solve a smooth PDE, we require that $J_{t}$ is $\psi_{1}$-\emph{twisted-periodic}, i.e., $J_{t+1}(w)=\d\psi_{1}^{-1}J_{t}(\psi_{1}(w))\d\psi_{1}.$

By counting rigid-up-to-translation elements in $\mathscr{M}(\psi_{t},J_{t})$, with $w(-\infty)$ considered as input and $w(+\infty)$ considered as output one obtains a map:
\begin{equation*}
  d_{\psi_{t},J_{t}}:\mathrm{CF}(\psi_{t})\to \mathrm{CF}(\psi_{t}).
\end{equation*}
One uses the cylinder $u$ to determine the capping of the output in terms of the capping of the input. With these homological conventions, the Floer differential decreases action. The homology of $(\mathrm{CF}(\psi_{t}),d_{\psi_{t},J_{t}})$ is denoted\footnote{To be pedantic, $\mathrm{HF}(\psi_{t})$ should be defined as a limit of the homologies of $(\mathrm{CF}(\psi_{t}),d_{\psi_{t},J_{t}})$ as $J_{t}$ varies over all admissible complex structures.} by $\mathrm{HF}(\psi_{t})$.

\paragraph{}\label{sec:inverse-identification}

It follows that $J_{-t}$ is $\psi_{1}^{-1}$ twisted periodic which yields an inversion identification $\iota:\mathscr{M}(\psi_{t},J_{t})\to \mathscr{M}(\psi_{t}^{-1},J_{-t})$ given by $\iota(w)(s,t)=w(-s,-t)$.

\subsubsection{Reeb flows}
\label{sec:reeb-flows}

This section is only relevant when $W$ is open. We describe the set-up on $\C^{n}$, although everything holds verbatim on a general convex-at-infinity manifold $W$ if one replaces $\pi \abs{z}^{2}$ by a suitable function $r$.

Let $R_{\delta,s}$ be the Hamiltonian flow generated by: $$\mu_{\delta}(\pi\abs{z}^{2}-1)+1$$ where $\mu_{\delta}$ is a convex cut-off function so that:
\begin{enumerate}
\item $\mu_{\delta}(x)$ is the constant $\delta/2$ for $x\le 0$,
\item $\mu_{\delta}(x)=x$ for $x\ge \delta$,
\item $\mu_{\delta}'(x)>0$ for $x>0$.
\end{enumerate}
It is important that $R_{\delta,s}$ agrees with the flow of $\pi\abs{z}^{2}$ for $\pi\abs{z}^{2}>1+\delta$, i.e., the ideal restriction of $R_{\delta,s}$ is the standard one-periodic Reeb flow.

\subsubsection{Spectral invariants}
\label{sec:spectral-invariants}

For any class $a\in \mathrm{HF}(\psi_{t})$, let:
\begin{equation*}
  c(\psi_{t};a):=\textstyle\inf\set{\sup_{i}\mathscr{A}(\psi_{t};x_{i},u_{i}):\text{the cycle }\sum_{i} (x_{i},u_{i})\text{ represents }a};
\end{equation*}
loosely speaking, $c(\psi_{t};a)$ is a homological min-max over all representative cycles; see, e.g., \cite{bialy-polterovich-duke-1994,schwarz_spectral_invariants,oh-2005-duke,frauenfelder-ginzburg-schlenk,frauenfelder-schlenk,usher08-spect-floer}.

When $\psi_{t}$ is convex-at-infinity and its ideal restriction is a negative Reeb flow there is a distinguished class $1\in \mathrm{HF}(\psi_{t})$ which plays the role of the unit element for the pair-of-pants product; see \S\ref{sec:sub-addit-spectr}. This is initially defined for non-degenerate systems. One extends the definition of $\mathrm{HF}(\psi_{t})$ and the unit element to all systems whose ideal restriction is a non-positive Reeb flow as an inverse limit over continuation maps. The spectral invariant $c(\psi_{t};1)$ extends to the limit; we refer the reader to \cite{alizadeh-atallah-cant} for details on continuation maps and this extension.

\subsubsection{The spectral norm}
\label{sec:spectral-norm}

The \emph{spectral norm} of a compactly supported system $\psi_{t}$ is defined by:
\begin{equation}\label{eq:spectral-norm}
  \gamma(\psi_{t}):=c(\psi_{t};1)+c(\psi_{t}^{-1};1)=\lim_{s\to 0}c(R_{-st}\psi_{t};1)+c(R_{-st}\psi_{t}^{-1};1),
\end{equation}
where $R_{s}$ is one of the Reeb flows constructed in \S\ref{sec:reeb-flows} (the parameters going into the definition of $R$ do not matter in the limit $s\to 0$). It is crucial that $\psi_{t}$ is compactly supported, as the ideal restrictions of $\psi_{t}$ and $\psi_{t}^{-1}$ are then both non-positive.

A similar definition of a spectral norm in the case of $\C^{n}$ appears in \cite{viterbo92-GF} using the framework of generating functions. The paper which introduced the spectral norm using the framework of Floer theory in aspherical manifolds is \cite{schwarz_spectral_invariants}, and the extension to convex-at-infinity $W$ is due to \cite{frauenfelder-schlenk}.

\subsubsection{Spectral diameter}
\label{sec:spectral-diameter}

As explained in the introduction, one obtains a \emph{spectral diameter}\footnote{If one defines the distance $d(\varphi_{t},\psi_{t})=\gamma(\varphi_{t}^{-1}\psi_{t})$ then $\gamma(U)$ is the diameter of the set of all systems supported in $U$.} $\gamma(U)$, for any open set $U\subset W$, as the supremum of $\gamma(\psi_{t})$ over systems $\psi_{t}$ with compact support in $U$. The spectral diameter as a capacity is considered in \cite{schwarz_spectral_invariants,frauenfelder-schlenk}; an earlier capacity appears in \cite{viterbo92-GF}, namely the \emph{spectral capacity}:
\begin{equation*}
  c(U):=\sup\set{c(H_{t};1):\text{$H_{t}$ has compact support in $U$}},
\end{equation*}
which is known to be a normalized capacity on $\C^{n}$. See \S\ref{sec:non-norm-hamilt} for our conventions used to define $c(H_{t};1)$ on closed manifolds.

\subsubsection{Sub-additivity for spectral invariants and pair-of-pants product}
\label{sec:sub-addit-spectr}

Suppose that $\varphi_{t},\psi_{t}$ are two contact-at-infinity systems. The pair-of-pants product operation is a map:
\begin{equation*}
  \ast_{\mathrm{PP}}:\mathrm{HF}(\varphi_{t})\otimes \mathrm{HF}(\psi_{t})\to \mathrm{HF}(\varphi_{t}\psi_{t}),
\end{equation*}
defined by counting solutions to Floer's equation over a pair-of-pants surface, as in, e.g., \cite{schwarz-thesis,schwarz_spectral_invariants,seidel-eq-pop,kislev_shelukhin,alizadeh-atallah-cant}. The precise structure of Floer's equation involves the choice of a Hamiltonian connection over the pair-of-pants, and using a connection with zero curvature produces sharp energy estimates in terms of the actions of asymptotics. These energy estimates imply the sub-additivity of the spectral invariants:
\begin{equation*}
  c(\varphi_{t}\psi_{t};a\ast_{\mathrm{PP}}b)\le c(\varphi_{t};a)+c(\psi_{t};b).
\end{equation*}
For further discussion we refer the reader to \cite[\S2.4]{alizadeh-atallah-cant} which describes in detail the pair-of-pants product in convex-at-infinity symplectic manifolds.

It is important to note that, if $\varphi_{t}=R_{-\epsilon t}$ is a Reeb flow with a small negative speed, and $a=1$ is the unit element, then $1\ast_{\mathrm{PP}}b$ is the image of $b$ under the continuation map $\mathrm{HF}(\psi_{t})\to \mathrm{HF}(\varphi_{t}\psi_{t})$; see \cite{kislev_shelukhin}.

\subsubsection{Non-normalized Hamiltonians}
\label{sec:non-norm-hamilt}

It is sometimes convenient to generalize the definition to allow non-normalized Hamiltonian functions via the rule:
\begin{equation*}
  c(H_{t}+f(t);a)=c(\psi_{t};a)+\int_{0}^{1}f(t)dt,
\end{equation*}
where $H_{t}$ is the normalized generator for $\psi_{t}$, and $f(t)$ is a time-dependent shift. If we use the symbol $c(\psi_{t};a)$, then we require using the normalized generator. On the other hand, if we use the symbol $c(H_{t};a)$, then we allow $H_{t}$ to be non-normalized.

\subsection{The displacement energy bound}
\label{sec:displ-energy-bound}

In this section we recall the displacement energy bound on the spectral norm:
\begin{prop}\label{prop:displ-energy-bound}
  Let $W$ be a rational, semipositive, and convex-at-infinity symplectic manifold, and let $\psi_{t},\varphi_{t}$ be two compactly supported Hamiltonian systems such that $\psi_1$ displaces the support of $\varphi_t$. Then $\gamma(\varphi_t)\leq2\gamma(\psi_t)$. Moreover, if $W$ is open, then $c(\varphi_{t};1)\le \gamma(\psi_{t})$.
\end{prop}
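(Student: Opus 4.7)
The plan is to reduce both inequalities to a single \emph{key displacement lemma}: under the hypothesis $\psi_{1}(U)\cap U=\emptyset$ with $U:=\mathrm{supp}(\varphi_{t})$, I would establish $c(\psi_{t}\cdot\varphi_{t};1)\le c(\psi_{t};1)$, where $\psi_{t}\cdot\varphi_{t}$ denotes concatenation of Hamiltonian paths. Granting the lemma, the open-manifold inequality $c(\varphi_{t};1)\le \gamma(\psi_{t})$ would follow from pair-of-pants subadditivity of \S\ref{sec:sub-addit-spectr}: since $\varphi_{t}$ is path-homotopic to $\psi_{t}^{-1}\cdot \psi_{t}\cdot \varphi_{t}$ in $\widetilde{\mathrm{Ham}}$ (the loop $\psi_{t}^{-1}\cdot\psi_{t}$ contracting through the obvious rescaling), and $1\ast_{\mathrm{PP}}1=1$, one obtains
\begin{equation*}
  c(\varphi_{t};1)\le c(\psi_{t}^{-1};1)+c(\psi_{t}\cdot\varphi_{t};1)\le c(\psi_{t}^{-1};1)+c(\psi_{t};1)=\gamma(\psi_{t}).
\end{equation*}
Since $\mathrm{supp}(\varphi_{t}^{-1})=U$, the same bound applies to $\varphi_{t}^{-1}$; summing yields $\gamma(\varphi_{t})\le 2\gamma(\psi_{t})$ in the open case. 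For the closed case, a mean-normalization shift enters the key lemma, but cancels when summing the analogous bounds for $\varphi_{t}$ and $\varphi_{t}^{-1}$, still yielding the factor-of-two bound.

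For the key lemma itself, the starting point is the observation that every fixed point of $\psi_{1}\varphi_{1}$ lies outside $U$: if $x\in U$, then $\varphi_{1}(x)\in U$, so $\psi_{1}\varphi_{1}(x)\in \psi_{1}(U)$, which is disjoint from $U$ and cannot equal $x$. Consequently, every fixed point of $\psi_{1}\varphi_{1}$ is simultaneously fixed by $\psi_{1}$ and by $\varphi_{1}$. A direct computation of the action integral for the concatenated path, using that the generator of $\varphi_{t}$ vanishes at any such fixed point $x\notin U$, shows that its action with respect to $\psi_{t}\cdot\varphi_{t}$ equals its action with respect to $\psi_{t}$. Thus the generators of $\mathrm{CF}(\psi_{t}\cdot\varphi_{t})$ form a subset of those of $\mathrm{CF}(\psi_{t})$, with identical action values.

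The main obstacle is upgrading this generator-level identification to a cycle-level bound on the unit class. Concretely, I would construct a one-parameter family of Hamiltonian paths interpolating between $\psi_{t}$ and $\psi_{t}\cdot\varphi_{t}$---for instance, by scaling the generator of $\varphi$ down to zero while keeping its support inside $U$---so that the displacement hypothesis continues to apply throughout the family. A monotone continuation map induced by this family should then send the unit to the unit without increasing action, because all periodic-orbit asymptotics lie outside $U$ and the $\varphi$-contribution to the action integrand vanishes there. This step relies on the inverse-limit construction of the unit via small Reeb perturbations from \S\ref{sec:spectral-invariants} and the continuation-map technology for convex-at-infinity symplectic manifolds developed in \cite{alizadeh-atallah-cant}.
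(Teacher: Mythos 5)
First, a remark on what you are being compared against: the paper does not prove Proposition \ref{prop:displ-energy-bound} itself --- it cites Ginzburg and the earlier literature, noting only that the argument extends beyond the aspherical case because spectral invariants remain subadditive and valued in the action spectrum (this being the role of rationality). Your overall architecture is exactly the standard one from that literature: reduce everything to the key lemma $c(\psi_{t}\varphi_{t};1)\le c(\psi_{t};1)$, conclude the one-sided bound by the triangle inequality, and apply it to both $\varphi_{t}$ and $\varphi_{t}^{-1}$ (with normalization constants cancelling in the closed case). Your analysis of the fixed points of $\psi_{1}\varphi_{1}$ and of their actions is correct.

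The genuine gap is in the mechanism you propose for the key lemma. The family of generators $H\#(sF)$, $s\in[0,1]$, is not monotone: its $s$-derivative is (a reparametrization of) $F$, which in general takes both signs, so there is no monotone continuation map along this family. For a general homotopy the action shift of the continuation map is controlled by $\int_{0}^{1}\max_{x,t}\partial_{s}H_{s,t}\,\d s$, an integral evaluated along the entire continuation cylinder --- which is free to pass through $U$ --- and not by the values of the Hamiltonian at the asymptotic orbits. Hence the fact that all asymptotics lie outside $U$ does not yield the estimate you want; as written, your argument only recovers the Hofer-norm bound $c(\psi_{t}\varphi_{t};1)\le c(\psi_{t};1)+\int\max F_{t}\,\d t$. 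The standard repair --- and the reason rationality appears as a hypothesis, which your proposal never invokes --- is spectrality: the function $s\mapsto c(\psi_{t}\varphi_{t}^{s};1)$ is continuous in $s$ (Hofer--Lipschitz continuity) and takes values in the action spectrum of $\psi_{t}\varphi_{t}^{s}$, which by your fixed-point analysis is independent of $s$ and coincides with the set of actions of the fixed points of $\psi_{1}$ outside $U$. Rationality and compact support make this spectrum closed and totally disconnected, so a continuous function of $s$ valued in it is constant, giving $c(\psi_{t}\varphi_{t};1)=c(\psi_{t};1)$. With that substitution for your continuation step, the rest of your proposal goes through.
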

\begin{proof}
  See \cite{ginzburg-2005-weinstein} which proves the result in the both the open and the closed case, assuming $W$ is aspherical; see also \cite{viterbo92-GF,hofer-zehnder-book-1994,schwarz_spectral_invariants,oh-2005-duke,usher-CCM-2010}. The argument extends easily from aspherical to rational and semipositive, all that is required is that spectral invariants are well-defined and are sub-additive (we use semipositivity to ensure their well-definedness) and valued in the action spectrum (this is why we assume rationality).
\end{proof}

\subsection{An upper bound to the capacity of a cylinder}
\label{sec:cyl-upper-bound}
In this section we bound the spectral displacement energy of the cylinder following closely the ideas in \cite[\S 2.4]{polterovich-book}.

A disc $B(1)\subset\C$ is symplectomorphic to a square with sides of length $1$; in particular, $Z(1)$ is symplectomorphic to $R=(0,1)^2\times\C^{n-1}\subset\C^n$. Therefore, it is enough to show that $\gamma(R)\leq2$. Consider the Hamiltonian system:\[\psi_t(x_1,y_1,\dots,x_n,y_n)=(x_1,y_1+t,\dots,x_n,y_n),\] generated by $H(x,y)=x_1$. Here, we identify $z_j=x_j+iy_j$ for all $j$. Then: \[\psi_1(R)\cap R=\emptyset,\] and the displacement occurs in $(0,1)\times(0,2)\times\C^{n-1}\subset\C^n$. Of course $\psi_t$ is neither compactly supported nor normalized.

Let $\varphi_t$ be a Hamiltonian system supported inside $(0,1)^2\times K\subset R$, for a compact set $K\subset\C^{n-1}$. Consider a Hamiltonian system $\psi_t'$ generated by a function $H'$ obtained by cutting-off $H$ outside an arbitrarily small neighbourhood of $(0,1)\times(0,2)\times K$; we can ensure that $$\max H'-\min H'\le 1+\delta$$ for some small $\delta$. This construction yields a system whose time-one map $\psi_1'$ displaces the support of $\varphi_t$. Moreover, by \S\ref{sec:displ-energy-bound}:
\begin{equation*}
  \gamma(\varphi_t)\leq 2\gamma(\psi_t')\leq 2(\max H'-\min H')=2+2\delta.
\end{equation*}
Here we use the well-known estimate that the spectral norm is less than the Hofer norm; see, e.g., \cite{kislev_shelukhin}. Since $\varphi_{t}$ is arbitrary and $\delta$ can be chosen arbitrarily small, we conclude the desired result $\gamma(Z(1))=\gamma(R)\leq 2$.

\subsection{Displacement energy of a ball}
\label{sec:disp-energy-ball}

In this section we show that the spectral displacement energy $e_{\gamma}(B(1))$ of $B(1)$ is equal to $1$; this is the statement of Theorem \ref{theorem:displacement_energy}.

We begin by showing $e_{\gamma}(B(1))\geq1$. Let $\psi_{t}$ be a Hamiltonian system such that $\psi_1$ displaces $B(1)$ and, hence, the support of any Hamiltonian system $\varphi_t$ supported therein. Proposition \ref{prop:displ-energy-bound} implies $c(\varphi_t;1)\leq\gamma(\psi_t)$.

It is known that one can find a bump function supported in $B(1)$ generating a Hamiltonian system $\varphi_t$ whose spectral invariant $c(\varphi_t;1)$ is arbitrarily close to $1$; indeed, this follows from the construction in \S\ref{sec:spectr-diam-pack}. In particular, one concludes that $\gamma(\psi_t)\geq 1$; this proves the desired inequality.

We now show that $e_{\gamma}(B(1))\leq1$; to that effect, it is enough to find a Hamiltonian system $\psi'_t$ whose time-1 map displaces $B(1)$ and $\gamma(\psi'_t)\leq1$. The system constructed in \S\ref{sec:cyl-upper-bound} satisfies the desired properties. This concludes the proof of Theorem \ref{theorem:displacement_energy}.

\subsection{Ball packings of toric domains}
\label{sec:toric-geom-pack}

This section explains the toric approach to ball packings following \cite{schlenk-05}. This is used to show that:
\begin{enumerate}
\item the ellipsoid $E(a_{1}, \dots, a_{n})$ with $a_{n} \geq 2a_{1}$, and
\item the polydisk $P(a_{1},\dots,a_{n})$, 
\end{enumerate}
each contain two disjoint balls whose capacities are arbitrarily close to $a_{1}$. Then Theorem \ref{theorem:packings-by-two-balls} bounds their spectral diameter from below by $2a_{1}$. On the other hand, the displacement energy bound of \S\ref{sec:displ-energy-bound} and the construction in \S\ref{sec:cyl-upper-bound} bounds their spectral diameter from above by $2a_{1}$. Such arguments played a role in the proof of Theorem \ref{theorem:main} and the results in \S\ref{sec:two-ball-capacity}.

Define the moment map $\mu: \mathbb{C}^{n}\rightarrow \mathbb{R}^{n}_{\geq 0}$ by $\mu(z)=(\pi |z_{1}|^{2}, \dots, \pi |z_{n}|^{2})$. Given a domain $D\subset \C^{n}$, its image $\mu(D)$ will be referred to as its \emph{toric image}. It is easy to check that the toric image of an ellipsoid is a simplex while the toric image of a polydisk is a rectangular parallelepiped. A domain $D$ is called \emph{toric} provided $D=\mu^{-1}(\mu(D))$; a moment's thought will reveal the both ellipsoids and polydisks are toric domains.

The standard technique used to construct a ball packing of a toric domain is to decompose its toric image into simplices, each of which are supposed to represent embedded symplectic balls. The key is the following symplectomorphism $\Phi$ between $\mathbb{R}^{n}_{>0} \times T^{n}$ and $\mathbb{C}^{n} \setminus \{z_{1}\cdots z_{n} = 0\}$: $$(a_{1}, \dots, a_{n}, \theta_{1}, \dots, \theta_{n}) \mapsto \frac{1}{\sqrt{\pi}}(\sqrt{a_{1}} e^{2\pi i  \theta_{1}}, \dots, \sqrt{a_{n}} e^{2\pi i \theta_{n}}),$$ where $(a_{1}, \dots, a_{n})$ are the coordinates on $\mathbb{R}^{n}_{> 0}$, $(\theta_{1}, \dots, \theta_{n})$ are the coordinates on $T^{n} = \mathbb{R}^{n}/ \mathbb{Z}^{n}$; the symplectic form on the domain is $\sum \d a_{i} \wedge \d\theta_{i}$.

It is a rather deep fact that: 
\begin{lemma}\label{lemma:schlenk}
  If $\Delta(a)\subset \R^{n}_{>0}$ is the open simplex consisting of open convex combinations of $0,ae_{1},\dots,ae_{n}$, then the Gromov width of $\mu^{-1}(\Delta(a))$ is $a$.
\end{lemma}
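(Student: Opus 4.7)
I would prove the two bounds separately. The upper bound follows from the inclusion $\mu^{-1}(\Delta(a)) \subset B(a)$: monotonicity of the Gromov width under symplectic inclusion, combined with Gromov's non-squeezing theorem, gives $\text{width}(\mu^{-1}(\Delta(a))) \leq a$.

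For the lower bound, I must construct, for every $\epsilon > 0$, a symplectic embedding $B(a - \epsilon) \hookrightarrow \mu^{-1}(\Delta(a))$. The core idea uses the symplectomorphism $\Phi$ above: under $\Phi^{-1}$, $\mu^{-1}(\Delta(a))$ is identified with $\Delta(a) \times T^{n}$, and translation in the $\R^{n}$-factor of $\R^{n} \times T^{n}$ is a symplectomorphism. Choosing $\delta > 0$ with $n\delta \leq \epsilon/2$, the composition of $\Phi^{-1}$, the translation by $\delta(1, \ldots, 1)$ in the $\R^{n}$-factor, and $\Phi$ yields the explicit map $z_{i} \mapsto \sqrt{\abs{z_{i}}^{2} + \delta/\pi}\, e^{i\arg z_{i}}$, which is a symplectic embedding of $B(a - \epsilon) \setminus \set{z_{1} \cdots z_{n} = 0}$ into $\mu^{-1}(\Delta(a))$.

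The main obstacle is that this map does not extend smoothly across the coordinate hyperplanes in the source, so it only produces an embedding of an open dense subset of $B(a - \epsilon)$. To obtain an embedding of the full ball, I would pre-compose with a symplectic embedding $B(a - 2\epsilon) \hookrightarrow B(a - \epsilon) \setminus \set{z_{1} \cdots z_{n} = 0}$. The existence of this second embedding is the substantive part of the lemma and is the content of Schlenk's construction in \cite{schlenk-05}: since each coordinate hyperplane has real codimension two, a carefully chosen Hamiltonian diffeomorphism supported in $B(a - \epsilon)$ can push the image of a slightly smaller ball off of this codimension-two subset. Naive translation fails, because translating far enough to avoid all $n$ hyperplanes simultaneously forces the ball outside the ambient $B(a - \epsilon)$; the resolution is a more delicate construction built hyperplane-by-hyperplane using cutoff Hamiltonians localized near each $\set{z_{i} = 0}$. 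Granting this displacement step, the composition is defined on all of $B(a - 2\epsilon)$, and letting $\epsilon \to 0$ gives the desired lower bound $a$.
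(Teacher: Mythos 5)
Your upper bound is fine, and you have correctly located where the real content of the lemma lies, but the way you handle that content leaves the lemma unproved. A preliminary remark: the translation step is superfluous. Since $\Delta(a)=\set{x\in\R^{n}_{>0}:\sum x_{i}<a}$, one has $\mu^{-1}(\Delta(a))=B(a)\setminus\set{z_{1}\cdots z_{n}=0}$ outright (this identity is the whole point of the lemma's difficulty), so $B(a-\epsilon)\setminus\set{z_{1}\cdots z_{n}=0}$ already sits inside the target via the inclusion map; no room needs to be created by shifting the action coordinates.

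The genuine gap is the step you ``grant'': a symplectic embedding $B(a-2\epsilon)\hookrightarrow B(a-\epsilon)\setminus\set{z_{1}\cdots z_{n}=0}$. By the identity above this is not an auxiliary displacement result --- it \emph{is} the lemma, with $a$ replaced by $a-\epsilon$ --- so your argument reduces the statement to itself, and everything rests on the sketch you attach to this step. That sketch (a Hamiltonian isotopy built ``hyperplane-by-hyperplane'' from cutoffs localized near each $\set{z_{i}=0}$) is unjustified and does not describe the cited construction. For $n\ge 2$ the set to be avoided is a union of $n$ pairwise-intersecting symplectic hyperplane-disks of capacity $a-\epsilon$, all passing through the center of the ball you wish to save; a local push near one of them would have to move a $(2n-2)$-dimensional disk of capacity $a-2\epsilon$ into the thin shell $B(a-\epsilon)\setminus B(a-2\epsilon)$ while keeping the whole configuration embedded, and you give no reason this is possible. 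The actual construction (Traynor; Schlenk \S3.1, which is also all the paper itself invokes) is global and explicit: one chooses an area-preserving embedding $\sigma$ of the plane into the strip $(0,\infty)\times(0,1)$ with the monotonicity property that $\sigma(D(c))\subset(0,c+\delta)\times(0,1)$ for every $c$; the $n$-fold product of such maps then carries $B(a-\epsilon)$ into $\Delta(a)\times T^{n}\cong\mu^{-1}(\Delta(a))$, because the action coordinates of the image sum to less than $a-\epsilon+n\delta<a$. If your intention is simply to cite Schlenk for this step --- as the paper does --- that is acceptable, but then the surrounding apparatus (the translation, the ``hyperplane-by-hyperplane'' pushing) should be removed rather than offered as the mechanism of proof.
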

\begin{proof}
  For the proof see, e.g., \cite[\S3.1]{schlenk-05}. This is not immediate; indeed: $$\mu^{-1}(\Delta(a))=B(a)\setminus \set{z_{1}\dots z_{n}=0},$$ so any embedding of a ball must miss the removed parts. However, one can still embed balls with capacity arbitrarily close to $a$. 
\end{proof}

Introduce the group $G=\mathrm{SL}_{n}(\Z)\ltimes \mathbb{R}^{n}$ of special affine transformations. Then $G$ acts on $\R^{n}$ in a natural way, and the action extends to an action on $\R^{n}\times T^{n}$ by canonical transformations (in particular, the action is via symplectomorphisms). As a consequence, we conclude the following corollary of Lemma \ref{lemma:schlenk}: {\itshape if the toric image of a toric domain $\Omega$ contains a disjoint union $\Delta(a)\sqcup g(\Delta(a))$ where $g\in G$, then $\Omega$ contains two disjoint symplectic balls whose capacities are each arbitrarily close to $a$.}

Thus to prove that the ellipsoid $E(a_{1},\dots,a_{n})$ with $a_{n}\ge 2a_{1}$, and the polydisk $P(a_{1},\dots,a_{n})$ each contain two disjoint balls of capacity arbitrarily close to $a_{1}$, it suffices to cut its toric image into subsimplices; this can be done and is shown in Figure \ref{fig:affine_trans_ba}.

\begin{figure}[h]
  \centering
  \begin{tikzpicture}
    \fill[pattern=north east lines,pattern color=black!50!white] (0,0) -- (0,1) -- (2.5,0) -- (0,0);
    \fill[pattern=north east lines,pattern color=red!50!white] (0,0) -- (0,1) -- (1,0) -- (0,0);
    \fill[pattern=north east lines,pattern color=blue!50!white] (1,0) -- (0,1) -- (2,0) -- (1,0);
    \draw[<->] (3,0) -- (0,0) -- (0,2);
    \draw[thick,every node/.style={above,shift={(0,-.55)}}] (0,1) -- (1,0)node{$a_{1}$} (0,1)--(0,0)--(2,0)node {$2a_{1}$}--cycle (2.5,0)node{$a_{2}$};
    \draw[dashed] (0,1)node[left]{$a_{1}$} -- (2.5,0);

    \begin{scope}[shift={(5,0)}]
      \draw[dashed, ->] (0,0) -- (3, 1.5);
      \draw[<->] (3, 0) --(0,0) -- (0,2);

      \fill[pattern=north east lines,pattern color=red!50!white] (1,0) -- (0.8,0.4) -- (0,1) -- (0,0)--cycle;
      \fill[pattern=north east lines,pattern color=blue!50!white] (2.4,0.7) -- (1.4,1.7) -- (2.4,1.7) -- (2.4, 0.7);

      \draw[dashed] (1,0) -- (0.8,0.4)--(0,1);
      \draw[dashed] (1.4, 1.7) -- (2.4,0.7);
      \draw[dashed] (1.4,1.7) -- (1.4, 0.7) -- (2.4, 0.7);

      \draw[thick] (0,1) -- (1,0) --(0,0)--cycle;
      \draw[thick] (2.4,0.7) -- (2, 1.5) -- (1.4, 1.7);
      \draw[thick] (2.4,0.7)--(1,0)node[below]{$a_{1}$}--(1,1) (2.4,0.7) -- (2.4, 1.7)--(1.4, 1.7)--(0,1)node[left]{$a_{1}$}--(1,1)--(2.4,1.7);
    \end{scope}
  \end{tikzpicture}
  \caption{Decomposing the toric image of the ellipsoid (shown in dimension $2$) and the polydisk (shown in dimension $n=3$) into standard simplices. In the picture on the right we have set $a_{1} = a_{2}$.}
  \label{fig:affine_trans_ba}
\end{figure}
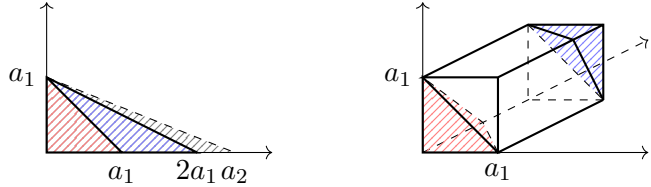

\subsection{Spectral diameter and packings of two balls}
\label{sec:spectr-diam-pack}

We present the proof of Theorem \ref{theorem:packings-by-two-balls} which provides a lower bound on the spectral diameter of domains in aspherical manifolds based on their packings by two balls. As in the statement of the theorem, let $U \subset W$ be an open domain in $W$, and let: $$B(a) \sqcup B(b) \to U$$ be an embedding of a disjoint union of Darboux balls. We will show that $\gamma(U) \geq a + b$ by explicitly constructing systems supported in these two balls. Other works have estimate spectral norms of systems supported in a disjoint union of balls, e.g., \cite{seyfaddini-spectral-killers-15,HLS-16,ishikawa-j-topol-anal-2016,tanny22,ganor-tanny-barricades}; these results are much more general and apply to any systems. Our result only requires a lower bound and can be deduced by elementary means.

The reason for assuming that $W$ is aspherical is so that the action is associated directly to the orbits (there is a single capping) and the spectral invariants lie in a compact nowhere dense action spectrum.

Consider the following Hamiltonian function:
$$H_{a,\eta,\delta}(z)=\mu_{\delta}(\eta(a-\pi \abs{z}^{2}))-\delta/2$$
where $0 < \eta < 1$ is any number, $\mu_{\delta}$ is the cut-off function in \S\ref{sec:reeb-flows}, and $\delta$ is much smaller than $\eta a$. It is important to note that this system has:
\begin{enumerate}
\item a $1$-periodic orbit at $z=0$ whose action is $\eta a-\delta/2$,
\item a family of $1$-periodic orbits when $\pi\abs{z}^{2}\ge a$ whose actions are $0$.
\end{enumerate}

Let $K_{a,b,\eta,\mu}$ be the system on $U$ obtained by implanting $H_{a,\eta,\delta}$ in the image of the ball $B(a)$ and the system $-H_{b,\mu,\delta}$ on the image of the ball $B(b)$.

Let $\psi_{t}$ be the generated system. One concludes:
\begin{enumerate}[label=(\alph*)]
\item the action spectrum of $\psi_{t}$ is the set $\{0, \eta a-\delta/2, \delta/2-\mu b\}+C$, and,
\item $\{0, \delta/2-\eta a, \mu b-\delta/2\}-C$ is the spectrum of $\psi_{t}^{-1}$.
\end{enumerate}
Here $C$ is a constant shift needed to normalize $K$, and is only necessary when $W$ is closed. Hence, the possible values for the spectral norm of $\psi_{t}$ are from the following set: $$\{\eta a-\delta/2, \mu b-\delta/2, \eta a + \mu b-\delta\}.$$ We now argue by cases. The key idea is to exploit continuity and spectrality of the spectral invariants, and non-degeneracy of the spectral norm.

In the first case we have $c(\psi_{t}) = \eta a-\delta/2+C$ and $c(\psi_{t}^{-1})=-C$; by taking $\eta \to 0$ and using continuity of the spectral invariants we obtain a system that is not the identity and has zero spectral norm which is a contradiction as the spectral norm is non-degenerate.

The second case, when $c(\psi_{t}) = C$ and $c(\psi_{t}^{-1})=\mu b-\delta/2-C$, is similarly ruled out. Hence the only possibility is that $\gamma(\psi_{t}) = \eta a + \mu b-\delta$. By taking $\delta$ to $0$ and $\eta,\mu$ to $1$ we conclude $\gamma(U) \geq a + b$, as desired.

\subsubsection{Beyond the aspherical case}
\label{sec:digr-non-asph}

The analogue of Theorem \ref{theorem:packings-by-two-balls} in the non-aspherical case is more subtle, and the argument in \S\ref{sec:spectr-diam-pack} requires some modification because the action functional becomes multivalued. In this section we refine the argument by taking into account the indices of the orbits.
\begin{prop}\label{prop:minimal-chern}
  Suppose that $(M^{2n},\omega)$ is a closed symplectic manifold and: $$\omega(u)\ne 0\implies \abs{c_{1}(u)}\ge n+1$$ for all $u\in \pi_{2}(M)$. If there is a symplectic embedding $B(a)\sqcup B(b)\to U$, where $U\subset M$ is an open set, then $\gamma(U)$ is at least $a+b$.
\end{prop}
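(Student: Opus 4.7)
The plan is to adapt the argument of \S\ref{sec:spectr-diam-pack} almost verbatim, taking care to control the action spectrum in the grading of the Floer-homological unit. Use the same Hamiltonian system $\psi_t$ generated by $K_{a,b,\eta,\mu}$, obtained by implanting $H_{a,\eta,\delta}$ in the Darboux ball $B(a)$ and $-H_{b,\mu,\delta}$ in $B(b)$, followed by a small Morse perturbation of the constant orbits in the complement. The non-degenerate 1-periodic orbits are: the center $x_a$ with natural action $\eta a-\delta/2$, the center $x_b$ with natural action $\delta/2-\mu b$, and the Morse-perturbed constants with actions close to $0$. Computed with a natural, sphere-free capping, the Conley-Zehnder indices of all these orbits, together with the grading $d$ of the unit, lie in the band $\{-n,\dots,n\}$.

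The new ingredient, compared to \S\ref{sec:spectr-diam-pack}, is that capped orbits now come in families indexed by $\pi_2(M)$: a capping differing from the natural one by a sphere class $u$ shifts the Conley-Zehnder grading by $2c_1(u)$ and the action by $-\omega(u)$. The hypothesis $|c_1(u)|\ge n+1$ whenever $\omega(u)\ne 0$ implies that any re-capping with $\omega(u)\ne 0$ shifts the grading by at least $2(n+1)>2n$, which exceeds the width of the band $\{-n,\dots,n\}$; such re-cappings of orbits in the band cannot land back in it. Conversely, re-cappings with $\omega(u)=0$ leave the action unchanged. Consequently, the action spectrum of $\psi_t$ in grading $d$ is contained in the finite set $\{0,\eta a-\delta/2,\delta/2-\mu b\}+C$, and an analogous conclusion holds for $\psi_t^{-1}$.

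Once this is established, the rest of the argument is as in \S\ref{sec:spectr-diam-pack}: by spectrality of spectral invariants, $\gamma(\psi_t)$ lies in $\{\eta a-\delta/2,\mu b-\delta/2,\eta a+\mu b-\delta\}$. The first two cases are ruled out by sending $\eta\to 0$ or $\mu\to 0$ and invoking continuity of the spectral invariants against the non-degeneracy of the spectral norm. The remaining case gives $\gamma(\psi_t)=\eta a+\mu b-\delta$, and sending $\delta\to 0$ and $\eta,\mu\to 1$ yields the desired bound $\gamma(U)\ge a+b$.

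The main obstacle is the action-spectrum computation, which requires the Chern-minimality bound to exceed the Conley-Zehnder range $2n$ of the natural cappings. The hypothesis $|c_1(u)|\ge n+1$ is precisely the sharp condition that forces every re-capping by a sphere with $\omega(u)\ne 0$ to leave the band relevant for the unit, thereby forbidding new actions from entering the min-max defining $c(\psi_t;1)$. A subsidiary point is to verify that the Floer complex is well-defined under this Chern assumption and that the pair-of-pants unit enjoys the usual spectrality and continuity properties; both are standard once the minimal Chern condition is assumed, since it guarantees the vanishing of bubbling in the relevant moduli spaces.
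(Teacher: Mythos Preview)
Your proposal is correct and follows essentially the same strategy as the paper: both implant $K_{a,b,\eta,\mu}$, take a small Morse perturbation, and use the Chern hypothesis to show that recappings by spheres with $\omega(u)\ne 0$ shift the Conley--Zehnder grading outside the band $\{-n,\dots,n\}$ occupied by the constant cappings, so only the ``natural'' actions can contribute to the unit.

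The one difference is in the endgame. The paper observes that, once recappings are excluded, the Floer complex of the $C^{2}$-small Morse function $K'$ in the grading of the unit is literally the Morse complex in top degree, so $c(K';1)$ equals the action of the unique maximum ($\approx a\eta$) and $c(-K';1)$ the action of the unique minimum of $K'$ ($\approx b\mu$), giving $\gamma(K')\approx a\eta+b\mu$ directly. You instead only record that the relevant action spectrum is contained in a finite set and then re-run the continuity/spectrality/non-degeneracy case analysis of \S\ref{sec:spectr-diam-pack}. This works, but it is slightly more roundabout and imports the non-degeneracy of the spectral norm on closed $M$ (valid here since the Chern hypothesis forces semipositivity); the paper's direct identification avoids that appeal.
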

\begin{proof}
  Consider the Hamiltonian $K_{a,b,\eta,\mu}$ as above, and take a small Morse perturbation $K'$ of $K_{a,b,\eta,\mu}$. For $\eta,\mu$ less than $1$, the only orbits of $K'$ are its critical points. Using the constant cappings, the maximum has action approximately $a\eta$, the minimum has action approximately $-b\mu$, and all others critical points have actions close to zero. The Conley-Zehnder indices of these orbits are related to their Morse indices in such a way that any non-constant recapping will not contribute to the spectral invariant of the unit, and in this fashion one concludes that $\gamma(K')\approx a\eta+b\mu;$ taking the limit $\eta,\mu\to 1$ yields the desired result.
\end{proof}

\subsection{Duality and inversion in rational symplectic manifolds}
\label{sec:duality-inversion-1}

We suppose throughout this section that $(W,\omega)$ is \emph{rational}, i.e., $\omega(\pi_{2}(M))$ is a discrete subgroup of $\R$; the minimal positive generator of this subgroup is denoted by $\rho$ and is called the \emph{rationality constant} of $(W,\omega)$.

The goal is to relate the spectral invariants of $\psi_{t}^{-1}$ with the spectral invariants of $\psi_{t}$. For related discussion we refer the reader to \cite{entov-poltero-IMRN-2003,usher-duality,leclercq-zapolsky}. We will show:
\begin{lemma}\label{lem:duality_lemma}
  Let $(W,\omega)$ be rational, and let $\psi_{t}$ be a contact-at-infinity Hamiltonian system with non-degenerate fixed points. For any class $b\in \mathrm{HF}(\psi_{t}^{-1})$, we have:
  \begin{equation*}
    c(\psi_{t}^{-1};b)=-\inf\set{c(\psi_{t};a):\ip{a,b}=1};
  \end{equation*}
  where $\ip{-,-}$ denotes the duality pairing $\mathrm{HF}(\psi_{t})\otimes \mathrm{HF}(\psi_{t}^{-1})\to \Z/2$ defined on generators by $\ip{(y,v),(x,u)}=1$ if and only if $x=y$ and $u=\bar{v}$; see \S\ref{sec:duality-isomorphism} for more details.
\end{lemma}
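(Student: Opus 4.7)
The plan is to promote the chain-level bilinear form from \S\ref{sec:duality-isomorphism} to a perfect duality between the action-filtered pieces of $\mathrm{HF}(\psi_t)$ and $\mathrm{HF}(\psi_t^{-1})$, from which the spectral-invariant identity can be read off. I will establish adjointness of the Floer differentials, deduce the inequality $\ge$ by a chain-level argument, and deduce the inequality $\le$ using a filtration duality together with rationality.

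The pairing descends to homology because the inversion $\iota$ of \S\ref{sec:inverse-identification} identifies $\mathscr{M}(\psi_t,J_t)$ with $\mathscr{M}(\psi_t^{-1},J_{-t})$ while reversing input and output. Combined with the capping convention of \S\ref{sec:cappings}, a trajectory contributing a matrix coefficient of $d_{\psi_t}$ from $(x_-,u_-)$ to $(x_+,u_+)$ corresponds to a trajectory contributing a matrix coefficient of $d_{\psi_t^{-1}}$ from $(x_+,\bar u_+)$ to $(x_-,\bar u_-)$. This is precisely the adjointness $\ip{d_{\psi_t}\alpha,\beta}=\ip{\alpha,d_{\psi_t^{-1}}\beta}$, so the pairing descends to $\mathrm{HF}(\psi_t)\otimes\mathrm{HF}(\psi_t^{-1})\to \Z/2$.

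For the inequality $c(\psi_t^{-1};b)\ge -\inf\set{c(\psi_t;a):\ip{a,b}=1}$, fix $a$ with $\ip{a,b}=1$ and $\epsilon>0$, and choose cycle representatives $\alpha,\beta$ with $\sup\mathscr{A}(\psi_t;\alpha)\le c(\psi_t;a)+\epsilon$ and $\sup\mathscr{A}(\psi_t^{-1};\beta)\le c(\psi_t^{-1};b)+\epsilon$. Since the chain-level pairing $\ip{\alpha,\beta}$ descends to $\ip{a,b}=1$, the sum is odd, so there exists some $(x,u)\in\alpha$ paired with $(x,\bar u)\in\beta$. By the lemma of \S\ref{sec:cappings}, $\mathscr{A}(\psi_t;x,u)+\mathscr{A}(\psi_t^{-1};x,\bar u)=0$, forcing $\sup\mathscr{A}(\psi_t;\alpha)+\sup\mathscr{A}(\psi_t^{-1};\beta)\ge 0$ and hence $c(\psi_t;a)+c(\psi_t^{-1};b)\ge -2\epsilon$; sending $\epsilon\to 0$ and taking the infimum gives this direction.

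The reverse inequality is the main obstacle. The chain-level identity $\mathscr{A}(\psi_t;y,\bar v)=-\mathscr{A}(\psi_t^{-1};y,v)$ shows that the subcomplexes $\mathrm{CF}^{\le \sigma}(\psi_t)$ and $\mathrm{CF}^{<-\sigma}(\psi_t^{-1})$ are exact mutual annihilators under the pairing. Combined with the adjointness above and using rationality to control Novikov completions, one expects a filtration duality at the homology level:
\begin{equation*}
  \bigl(\mathrm{im}(\mathrm{HF}^{\le \sigma}(\psi_t)\to\mathrm{HF}(\psi_t))\bigr)^{\perp} = \mathrm{im}(\mathrm{HF}^{<-\sigma}(\psi_t^{-1})\to\mathrm{HF}(\psi_t^{-1})).
\end{equation*}
Choosing $\sigma=-c(\psi_t^{-1};b)+\epsilon$ outside the action spectrum (possible by rationality, since the spectrum is nowhere dense), the class $b$ cannot be represented by a cycle of action strictly less than $-\sigma$, so $b$ is not in the right-hand image. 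Therefore $b$ does not annihilate the left-hand image, i.e., there exists $a$ with $c(\psi_t;a)\le\sigma$ and $\ip{a,b}=1$, giving the desired inequality. The delicate point, where most of the effort is concentrated, is passing from the chain-level perfectness of the pairing to its homological filtered analogue; this is carried out by a diagram chase that exploits adjointness together with the discreteness of the action spectrum.
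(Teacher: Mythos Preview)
Your proposal is correct and follows essentially the same route as the paper: both rely on the chain-level duality isomorphism of \S\ref{sec:duality-isomorphism} (adjointness of the differentials via the inversion $\iota$, together with rationality to ensure the filtered pairing is perfect) and then extract the spectral-invariant identity by a min-max argument in the style of \cite[Lemma 2.2]{entov-poltero-IMRN-2003}. The paper simply cites that reference for the second step, whereas you spell out the two inequalities explicitly; the step you label ``delicate'' and leave to a diagram chase---passing from the chain-level isomorphism \eqref{eq:duality-map} to the filtered homological statement $(\mathrm{im}\,\mathrm{HF}^{\le\sigma}(\psi_t))^{\perp}=\mathrm{im}\,\mathrm{HF}^{<-\sigma}(\psi_t^{-1})$---is exactly the content the paper defers to Entov--Polterovich.
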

\begin{proof}
  The proof uses the duality isomorphism in \S\ref{sec:duality-isomorphism} for rational symplectic manifolds. For the rest of the argument we refer the reader to the proof of \cite[Lemma 2.2]{entov-poltero-IMRN-2003}.
\end{proof}

\subsubsection{The duality isomorphism}
\label{sec:duality-isomorphism}

Let $(y,v)$ be a capped orbit of $\psi_{t}$, and consider the capped orbit of $\psi_{t}^{-1}$ given by $(y,\bar{v})$ given by $\bar{v}(s,t)=\psi_{t}^{-1}(v(-s,t))$ as in \S\ref{sec:cappings}. Let us denote by:
\begin{equation*}
  \ip{-,-}:\mathrm{CF}(\psi_{t})\otimes \mathrm{CF}(\psi_{t}^{-1})\to \Z/2,
\end{equation*}
the pairing defined by:
\begin{equation*}
  \ip{(y,v),(x,u)}=\left\{
    \begin{aligned}
      &1&&x=y\text{ and }u=\bar{v},\\
      &0&&\text{otherwise,}
    \end{aligned}
  \right.
\end{equation*}
where the equality between $u$ and $\bar{v}$ holds in the space of symplectic cappings, as described in \S\ref{sec:cappings}.

We first observe that, if $\sum (y_{j},v_{j})$ and $\sum (x_{i},u_{i})$ are semi-infinite sums of the correct type to define Floer chains, then only finitely many terms in each can contribute to their pairing, because:
\begin{equation*}
  \ip{(y_{j},v_{j}),(x_{i},u_{i})}\ne 0\text{ if and only if }\mathscr{A}(\psi_{t};y_{j},v_{j})+\mathscr{A}(\psi_{t}^{-1};x_{i},u_{i})=0,
\end{equation*}
and only finitely many terms in each sum are equal to the negative of a term in the other sum.

It follows from the identification of \S\ref{sec:inverse-identification} that this pairing satisfies:
\begin{equation*}
  \ip{d (y,v),(x,u)}+\ip{(y,v),d (x,u)}=0,
\end{equation*}
and therefore the map:
\begin{equation}\label{eq:duality-map}
  \alpha\in \mathrm{CF}_{\le -A}(\psi_{t})\mapsto \ip{\alpha,-}\in \mathrm{Hom}(\mathrm{CF}_{\ge A}(\psi_{t}^{-1}),\Z/2)
\end{equation}
is a chain map, where $\mathrm{CF}_{\ge A}=\mathrm{CF}/\mathrm{CF}_{<A}$. When $(W,\omega)$ is rational it is not hard to see that \eqref{eq:duality-map} is an isomorphism on chain level; briefly, the reason is that any element of $\mathrm{Hom}(\mathrm{CF}_{\ge A}(\psi_{t}^{-1}),\Z/2)$ can be regarded as an infinite sum of the form $\sum_{j} \ip{(y_{j},v_{j}),-}$ with $\mathscr{A}(\psi_{t};y_{j},v_{j})\le -A$ (any such infinite sum is well-defined as an element of the dual space). The rationality assumption ensures that there are only finitely many capped orbits of $\psi_{t}$ with action in a given compact interval, and hence $\sum (y_{j},v_{j})$ is well-defined as an element of $\mathrm{CF}_{\le -A}(\psi_{t})$. This proves that \eqref{eq:duality-map} is surjective; the proof of injectivity is easier and is left to the reader.

\subsection{Naturality transformations}
\label{sec:natur-transf}

Given a contact-at-infinity system $\phi_{t}$ with $\phi_{1}=\id$ whose orbits $\phi_{t}(x)$ are contractible, one can associate a chain level \emph{naturality transformation} $$\mathfrak{n}:\mathrm{CF}(\psi_{t})\to \mathrm{CF}(\phi_{t}\psi_{t})$$ for any other contact-at-infinity system $\psi_{t}$. There is a close relationship between naturality transformations and the \emph{Seidel representation} of \cite{seidel-representation-GAFA-1997}.

Bearing in mind that $W$ is assumed to be connected, the operation depends on an auxiliary choice of capping of one of the orbits $\phi_{t}(x)$, namely, a cylinder $u:[-1,1]\times \R/\Z\to W$ so that $u(-1,t)=x$ and $u(1,t)=\phi_{t}(x)$. For any other choice of point $y\in W$, one can take any path $\eta(s)$ from $x$ to $y$ and define $u_{y}$ to be the capping of $y$ relative $\phi_{t}$ obtained by concatenating:
\begin{enumerate}
\item the inverse of $\eta$ (from $y$ to $x$),
\item the capping $u$ of $x$,
\item the cylinder $\phi_{t}(\eta)$ from $\phi_{t}(x)$ to $\phi_{t}(y)$.
\end{enumerate}
Since $\phi_{t}$ has zero flux, $u_{y}$ is independent of $\eta$ up to equivalence.

The transformation $\mathfrak{n}$ sends the capped orbit $(y,v)$ in $\mathrm{CF}(\psi_{t})$ to the concatenation $(y,u_{y}\# \phi_{t}(v))$. It is immediate from our definition of the Floer homology differential in terms of the time-$1$ map that $\mathfrak{n}$ is a chain map; see, e.g., \cite[pp.\,3308]{kislev_shelukhin} for a similar construction.

One computes that $\mathfrak{n}$ shifts action values according to the action of the capped orbit $(y,u_{y})$ of the system $\phi_{t}$. Since $(y,u_{y})$ is a critical point for the action functional (on the covering space of cappings), this action shift is independent of $y$. As in \cite[Proposition 31]{kislev_shelukhin}, it follows that:
\begin{lemma}\label{lem:nat}
  If $a\in \mathrm{HF}(\psi_{t})$, then $c(\phi_{t}\psi_{t};\mathfrak{n}(a))=c(\psi_{t};a)+\mathscr{A}(\phi_{t};x,u).$\hfill$\square$
\end{lemma}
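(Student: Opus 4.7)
The plan is to reduce the lemma to a chain-level statement about $\mathfrak{n}$: namely, that it is bijective on generators and shifts the action by the uniform constant $\Delta := \mathscr{A}(\phi_{t};x,u)$. Once this is verified, the equality of spectral invariants follows from the definition of $c$ as a homological min-max: a chain $\sum (y_{i},v_{i})$ represents $a$ if and only if $\sum \mathfrak{n}(y_{i},v_{i})$ represents $\mathfrak{n}(a)$, and the suprema of the actions along these cycles differ by exactly $\Delta$, so the infimum computing $c$ shifts by $\Delta$ as well.

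First, I would establish the action-shift identity $\mathscr{A}(\phi_{t}\psi_{t}; y, u_{y}\#\phi_{t}(v)) = \mathscr{A}(\psi_{t};y,v)+\mathscr{A}(\phi_{t};y,u_{y})$. If $G_{t}, H_{t}$ are the normalized generators of $\phi_{t},\psi_{t}$, then $\phi_{t}\psi_{t}$ is generated by $K_{t}=G_{t}+H_{t}\circ\phi_{t}^{-1}$. A change-of-variables calculation for the cylinder $F(s,t)=\phi_{t}(v(s,t))$, using that each $\phi_{t}$ is a symplectomorphism and tracking the boundary term arising from the $t$-dependence of $\phi_{t}$, produces an identity of the form
\begin{equation*}
\int F^{*}\omega = \int v^{*}\omega + \int_{0}^{1}\bigl[G_{t}(\phi_{t}\psi_{t}(y)) - G_{t}(\phi_{t}(y))\bigr]dt
\end{equation*}
(with a sign fixed by the author's sign convention). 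Substituting this into the definition of the action and using the explicit formula for $K_{t}$, the integrals involving $G_{t}(\phi_{t}\psi_{t}(y))$ cancel, leaving exactly the claimed sum. To promote the right-hand side to a quantity independent of $y$, I would use that $\phi_{1}=\id$, so each loop $t\mapsto\phi_{t}(y)$ is a $1$-periodic orbit of $\phi_{t}$ and hence a critical point of the action functional on the cover of the loop space by cappings. The family $(y,u_{y})$ depends smoothly on $y$ via the parallel-transport construction, and the action is locally constant on any smooth family of critical points, so by connectedness of $W$ we conclude $\mathscr{A}(\phi_{t};y,u_{y}) = \mathscr{A}(\phi_{t};x,u) = \Delta$ for every $y$.

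It remains to observe that $\mathfrak{n}$ is bijective on generators: given any capped orbit $(y,w)$ of $\phi_{t}\psi_{t}$, the unique preimage is $(y,v)$ with $v(s,t):=\phi_{t}^{-1}\bigl((u_{y}^{-1}\#w)(s,t)\bigr)$, and one checks this defines a valid capping of the $\psi_{t}$-orbit at $y$. This bijection identifies representative cycles of $a$ with representative cycles of $\mathfrak{n}(a)$, and combined with the uniform action shift $\Delta$ the conclusion follows. The main obstacle is the action-shift calculation: the signs must be tracked carefully through the pullback $F^{*}\omega$ so that the boundary term produced by $t$-differentiation of $\phi_{t}$ cancels exactly the extra $G_{t}$-integral contributed by the formula $K_{t}=G_{t}+H_{t}\circ\phi_{t}^{-1}$. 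Once that identity is secured, the two remaining steps — independence of $y$ from the critical-point observation, and bijectivity from the explicit inverse — are essentially formal.
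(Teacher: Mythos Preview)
Your proposal is correct and follows essentially the same approach as the paper: the paper simply asserts in the paragraph preceding the lemma that $\mathfrak{n}$ shifts action by $\mathscr{A}(\phi_{t};y,u_{y})$, observes that this shift is independent of $y$ because $(y,u_{y})$ is a critical point of the action functional, and then cites \cite[Proposition~31]{kislev_shelukhin} for the conclusion. Your write-up fills in exactly the details the paper leaves implicit --- the concatenation formula for the action, the pullback computation for $\phi_{t}(v)$, and the bijectivity of $\mathfrak{n}$ on generators --- so there is no substantive difference in strategy.
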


\subsubsection{A particular naturality transformation}
\label{sec:part-natur-transf}

In this section we consider the particular loop $\phi_{t}$ on $W=\C^{n}$ generated by $H=\pi \abs{z}^{2}$. This Hamiltonian function is normalized and it generates the $\R/\Z$-action $\phi_{t}(z)=e^{2\pi it}z$. Therefore $\phi_{t}$ induces a naturality transformation on Floer homology:
\begin{equation*}
  \mathfrak{n}:\mathrm{HF}(\psi_{t})\to \mathrm{HF}(\phi_{t}\psi_{t}).
\end{equation*}
Since $\C^{n}$ is symplectically aspherical, there is a unique choice of auxiliary capping $(x,u)$; let us therefore take $x=0$ and $u$ to be the constant capping. It follows that $\mathfrak{n}$ is action preserving. To be precise:
\begin{lemma}
  Let $a\in \mathrm{HF}(\psi_{t})$ where $\psi_{t}$ is as above. Then the spectral invariant of $a$ equals the spectral invariant of $\mathfrak{n}(a)\in \mathrm{HF}(\phi_{t}\psi_{t})$.\hfill $\square$
\end{lemma}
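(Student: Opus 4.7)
The plan is to invoke Lemma \ref{lem:nat} and then verify by a direct computation that the auxiliary action shift $\mathscr{A}(\phi_{t};x,u)$ vanishes for the prescribed choice of $x$ and $u$. Recall that the capped orbit $(x,u)=(0,\text{const})$ is the choice of auxiliary data: since $\C^{n}$ is contractible (in particular symplectically aspherical), the orbit $t\mapsto \phi_{t}(0)=0$ admits an essentially unique capping, namely the constant one at the origin.

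For this choice, the action reads
\begin{equation*}
  \mathscr{A}(\phi_{t};0,u) \;=\; \int_{0}^{1} \pi\lvert \phi_{t}(0)\rvert^{2}\, \d t \;-\; \int u^{*}\omega \;=\; 0-0 \;=\; 0,
\end{equation*}
because $\phi_{t}(0)=0$ makes the Hamiltonian term vanish and $u$ being constant makes the area term vanish. Also $H(z)=\pi|z|^{2}$ is already normalized in the sense of \S\ref{sec:action}(1) (it is one-homogeneous in the radial coordinate on the non-compact end), so no additive correction is needed. Applying Lemma \ref{lem:nat} with this action shift equal to zero yields
\begin{equation*}
  c(\phi_{t}\psi_{t};\mathfrak{n}(a)) \;=\; c(\psi_{t};a),
\end{equation*}
which is exactly the claimed equality of spectral invariants. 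There is no real obstacle here: the content of the lemma is the identification of the naturality transformation for the rotation loop $\phi_{t}(z)=e^{2\pi it}z$ as the one with vanishing action shift, and this identification is forced by choosing the fixed point $0$ as the basepoint together with its constant capping.
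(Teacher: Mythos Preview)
Your proof is correct and follows the same approach as the paper: the paper notes just before the lemma that with the choice $x=0$ and constant capping the action shift vanishes (so the lemma is marked with $\square$), and you have simply written out this computation explicitly and invoked Lemma~\ref{lem:nat}.
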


Consider $\mathrm{HF}(\psi_{t}R_{-\epsilon t})$ for $\epsilon\in (0,1)$ and where $\psi_{t}$ is compactly supported. It is well-known that this group is $1$-dimensional over $\Z/2$. Indeed, the dimension is independent of the choice of $\psi_{t}$ and one can find a representative so that $\psi_{t}R_{-\epsilon t}$ has a single non-degenerate orbit located at $z=0$. It must therefore hold that the unit element is the sole non-zero element.

Let us then define $\mathrm{pt}\in \mathrm{HF}(\phi_{t}\psi_{t}R_{-\epsilon t})$ to be the image of $1$ under $\mathfrak{n}$. We call this element the ``point class'' since $\phi_{t}\psi_{t}R_{-\epsilon t}$ has a positive slope at infinity (namely $1-\epsilon$), and so the generator should be thought of as the minimum of a Morse function (rather than the unit element which should be thought of as the maximum of a Morse function). Since $\mathfrak{n}$ is an isomorphism, the point class generates $\mathrm{HF}(\phi_{t}\psi_{t}R_{-\epsilon t})$. This point class plays a role in \S\ref{sec:spectr-diam-ball}.

Using continuation or naturality maps, we therefore define $\mathrm{pt}\in \mathrm{HF}(\psi_{t}R_{st})$ for any positive slope $s\in (0,1)$ if $\psi_{t}$ is compactly supported. This element is natural, i.e., is preserved under continuation maps, because continuation maps commute with naturality maps; see, e.g., \cite[\S2.2.5]{cant-hedicke-kilgore}.

\subsection{The spectral diameter of a ball}
\label{sec:spectr-diam-ball}

This section proves that the spectral diameter of $B(a)$ equals $a$. The lower bound is well-known and follows from a simplified version of the construction in \S\ref{sec:spectr-diam-pack}. It is then enough to prove that $\gamma(\psi_{t}) \leq a$ for all systems $\psi_{t}$ compactly supported in $B(a)$. Without loss of generality, let us set $a=1$, and fix a system $\psi_{t}$ supported in $B(1)$.

The proof splits into five claims. The argument involves the naturality transformation $\mathfrak{n}$ from \S\ref{sec:part-natur-transf} which sends the unit class $1$ to the class $\mathrm{pt}$. It is important that $1$ generates $\mathrm{HF}(R_{-st}\psi_{t})$ and $\mathrm{pt}$ generates $\mathrm{HF}(R_{st}\psi_{t})$ for any $s\in (0,1)$ and any compactly supported $\psi_{t}$; i.e., there are two relevant Floer homologies and each is one-dimensional.

\begin{claim}\label{clm:five_claims}
Let $s\in (0,1)$ and for $\delta>0$ let $R_{s}=R_{\delta,s}$ be as in \S\ref{sec:reeb-flows}; then:
\begin{enumerate}[label=(\roman*)]
\item\label{item:h1} $\gamma(\psi_{t}) = 2s(1 + \delta/2) + c(R_{-st}\psi_{t}; 1) + c(R_{-st}\psi_{t}^{-1}; 1),$
\item\label{item:h2} $c(R_{-st}\psi_{t}^{-1}; 1) = - c(\psi_{t}R_{st}; \mathrm{pt}).$
\end{enumerate}
Let $\phi_{t}$ be the loop generated by $H = \pi |z|^{2}$, and suppose $s>1/2$; then:
\begin{enumerate}[resume,label=(\roman*)]
\item\label{item:h3} $c(R_{st}\psi_{t}; \mathrm{pt}) + c(\phi_{t} R_{-2st}; 1) \geq c(\phi_{t} R_{-st} \psi_{t}; \mathrm{pt}),$
\item\label{item:h4} $c(\phi_{t}R_{-st}\psi_{t}; \mathrm{pt}) = c(R_{-st}\psi_{t}, 1),$ and
\item\label{item:h5} $c(\phi_{t}R_{-2st};1)\le (1-2s)(1+\delta/2)$.
\end{enumerate}
\end{claim}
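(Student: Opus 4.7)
The plan is to establish identities (i)--(v) independently, using respectively the shift conventions for spectral invariants, the duality lemma, sub-additivity under the pair-of-pants product, the naturality transformation associated to $\phi_t$, and an explicit computation for a radial autonomous Hamiltonian.

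For (i), the starting observation is that the generator of $R_{-st}$, namely $-s[\mu_{\delta}(\pi|z|^{2}-1)+1]$, equals the constant $-s(1+\delta/2)$ on $B(1)$. Since $\psi_{t}$ is compactly supported in $B(1)$, the normalized generator of $R_{-st}\psi_{t}$ equals the generator of $\psi_{t}$ shifted by $-s(1+\delta/2)$ on the support, and agrees with the generator of $R_{-st}$ elsewhere. Combining the shift convention from \S\ref{sec:non-norm-hamilt} with the limiting definition of $\gamma$ in \S\ref{sec:spectral-norm} yields $c(R_{-st}\psi_{t};1)+c(R_{-st}\psi_{t}^{-1};1)=\gamma(\psi_{t})-2s(1+\delta/2)$, from which (i) follows. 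For (ii), I would apply Lemma \ref{lem:duality_lemma} to the system $R_{-st}\psi_{t}^{-1}$, whose path-inverse is $\psi_{t}R_{st}$. Since $\mathrm{HF}(\psi_{t}R_{st})$ is one-dimensional with generator $\mathrm{pt}$ (cf.\ \S\ref{sec:part-natur-transf}), the infimum in the duality formula is attained uniquely at $a=\mathrm{pt}$, producing the stated equality.

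For (iii), I would apply the sub-additivity inequality from \S\ref{sec:sub-addit-spectr} to the factorization $\phi_{t}R_{-2st}\cdot R_{st}\psi_{t}=\phi_{t}R_{-st}\psi_{t}$, valid because $\phi_{t}$ and $R_{st}$ are generated by Poisson-commuting functions of $\pi|z|^{2}$; the product class $1\ast_{\mathrm{PP}}\mathrm{pt}$ is then identified with $\mathrm{pt}\in\mathrm{HF}(\phi_{t}R_{-st}\psi_{t})$ via the continuation-map characterization at the end of \S\ref{sec:sub-addit-spectr} together with one-dimensionality of the target. For (iv), I would apply the naturality transformation $\mathfrak{n}$ of \S\ref{sec:part-natur-transf} to $R_{-st}\psi_{t}$; asphericity of $\C^{n}$ allows the auxiliary capping to be taken constant at $z=0$, which makes $\mathfrak{n}$ action-preserving and sends $1$ to $\mathrm{pt}$, giving $c(R_{-st}\psi_{t};1)=c(\phi_{t}R_{-st}\psi_{t};\mathrm{pt})$. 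For (v), the generator of $\phi_{t}R_{-2st}$ is the autonomous radial function $K(r)=r-2s\mu_{\delta}(r-1)-2s$, where $r=\pi|z|^{2}$. A convexity analysis of $\mu_{\delta}$ using $\mu_{\delta}(0)=\delta/2$, $\mu_{\delta}(\delta)=\delta$ and $\mu_{\delta}'(\delta)=1$ yields $\mu_{\delta}(x)\ge \max(x,\delta/2)$ for $x\in[0,\delta]$; combined with the critical-point equation $K'(r^{*})=0$, this delivers $\max K\le (1-2s)(1+\delta/2)$ when $s>1/2$. Since the unit class admits a representative concentrated near the Morse-Bott maximum of $K$ (after a small perturbation, with no recappings to worry about by asphericity), we conclude $c(\phi_{t}R_{-2st};1)\le \max K$, which is (v).

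The most delicate step I anticipate is (iii): one must identify $1\ast_{\mathrm{PP}}\mathrm{pt}$ with $\mathrm{pt}$ at the chain level, with careful tracking of action shifts and capping conventions, because any constant slippage in this identification would propagate through the substitutions (i)+(ii)+(iv)+(v) and spoil the sharp bound $\gamma(\psi_{t})\le 1$. A related subtlety present across (ii) and (iii) is the comparison of $c(\psi_{t}R_{st};\mathrm{pt})$ and $c(R_{st}\psi_{t};\mathrm{pt})$: these involve paths with different orders of composition and should agree via a naturality/conjugation argument exploiting that $R_{st}$ is autonomous and radial.
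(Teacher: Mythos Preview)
Your proposal is correct and follows the paper's proof item by item: (i) via the action shift on orbits confined to $B(1)$, (ii) via Lemma~\ref{lem:duality_lemma} and one-dimensionality, (iii) via sub-additivity, (iv) via the action-preserving naturality map $\mathfrak{n}$, and (v) via the convexity of $\mu_\delta$ bounding the radial generator. Two small clarifications: your worry about $c(\psi_t R_{st};\mathrm{pt})$ versus $c(R_{st}\psi_t;\mathrm{pt})$ is moot because $R_{st}$ is the identity on $B(1)$ (its generator is constant there), so $\psi_t$ and $R_{st}$ literally commute as paths; and for (i), the convention of \S\ref{sec:non-norm-hamilt} applies only to \emph{spatially} constant shifts, so the paper instead combines spectrality with continuity in $\sigma$ to conclude that $c(R_{-\sigma t}\psi_t;1)+\sigma(1+\delta/2)$ is independent of $\sigma\in(0,1)$, which is precisely the mechanism underlying your argument.
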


Before we prove the claim, we use it to determine the spectral diameter of the ball $B(1)$. We estimate:
\begin{equation*}
  \begin{aligned}
    \gamma(\psi_{t})
    &= 2s(1 + \delta/2) + c(R_{-st}\psi_{t}; 1) + c(R_{-st}\psi_{t}^{-1}; 1)\\
    &=2s(1 + \delta/2) + c(R_{-st}\psi_{t}; 1) - c(\psi_{t}R_{st}; \mathrm{pt}) \\
    &\leq 2s(1 + \delta/2) - c(\phi_{t} R_{-st} \psi_{t}; \mathrm{pt}) + c(\phi_{t} R_{-2st}; 1) + c(R_{-st}\psi_{t}; 1)\\
    &=2s(1 + \delta/2) + c(\phi_{t} R_{-2st}; 1)\\
    &\le 2s(1 + \delta/2) + (1-2s)(1+\delta/2) = 1 + \delta/2.
  \end{aligned}
\end{equation*}
The $j$th (in)equality uses the $j$th item in Claim \ref{clm:five_claims}. Since $\delta$ can be chosen arbitrarily small, we conclude $\gamma(\psi_{t}) \leq 1$. We now prove the five claims.

\begin{proof}[Proof of Claim \ref{clm:five_claims}]
  
We begin by recalling the definition of the spectral norm:
$$\gamma(\psi_{t}):= \lim_{\epsilon \to 0} c(R_{-\epsilon t} \psi_{t}; 1)+c(R_{-\epsilon t} \psi_{t}^{-1}; 1).$$
Therefore to prove \ref{item:h1} it is enough to prove the following identity for all systems $\psi_{t}$:
\begin{equation}\label{eq:s-epsilon}
  c(R_{-st} \psi_{t}; 1) +(s-\epsilon)(1+ \delta/2) =c(R_{-\epsilon t} \psi_{t}; 1).
\end{equation}
Let $K_{t}$ generate $\psi_{t}$ so that $R_{-\sigma t}\psi_{t}$ is generated by:
\begin{equation}
  \label{eq:K-sigma}
  K^{\sigma}_{t} = K_{t} - \sigma (\mu_{\delta}(\pi |z|^{2} -1) + 1),
\end{equation}
where we use the fact that $\psi_{t}$ and $R_{s}$ commute with each other. The cut-off function $\mu_{\delta}$ is defined in \S\ref{sec:reeb-flows}.

Observe that the 1-periodic orbits of the system $R_{-\sigma t}\psi_{t}$ are independent of $\sigma\in (0,1)$, and all remain in the ball $B(1)$; indeed, outside of the ball we have $R_{-\sigma t}\psi_{t}=R_{-\sigma t}$ which rotates with a non-zero speed because of our construction in \S\ref{sec:reeb-flows}. The action of the orbits depends on $\sigma$, as is clear from equation\eqref{eq:K-sigma}. Therefore a continuity argument (and the nowhere density of the action spectrum) implies that the spectral invariant must change according to:
\begin{equation*}
  \pd{}{\sigma}c(R_{-\sigma t}\psi_{t};1)=-(1+\mu_{\delta}(\pi \abs{z}^{2}-1))=-(1+\delta/2),
\end{equation*}
This implies equation \eqref{eq:s-epsilon}, and completes the proof of \ref{item:h1}.

The second item \ref{item:h2} follows from the duality formula proved in Lemma \ref{lem:duality_lemma} and the fact that the only non-zero class in $\mathrm{HF}(\psi_{t}R_{st})$ is the point class $\mathrm{pt}$.

The third item \ref{item:h3} is an immediate consequence of the subadditivity of the spectral invariants proved in \S\ref{sec:sub-addit-spectr}.

Item \ref{item:h4} follows from the fact that the naturality transformation defined by $\phi_{t}$ is action preserving and it sends the unit $1 \in \mathrm{HF}(R_{-st}\psi_{t})$ to the point class $\mathrm{pt}\in \mathrm{HF}(\phi_{t}R_{-st}\psi_{t})$ which is the only non-zero class in this group; see \S\ref{sec:natur-transf} for further discussion.

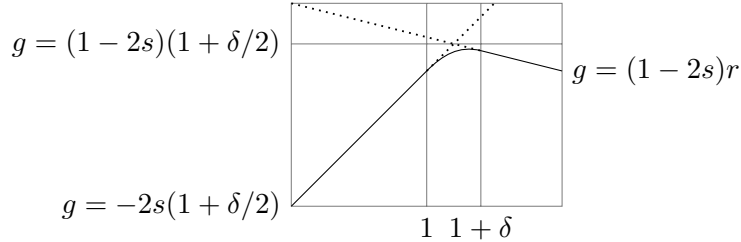
\begin{figure}[h]
  \centering
  \begin{tikzpicture}[scale=1.8]
    \draw[black!50!white] (0,-1.5) rectangle (2,0) (1.4,0)--(1.4,-1.5)node[below,black]{$1+\delta$} (1.0,0)--(1.0,-1.5)node[below,black]{$1\vphantom{1+\delta}$} (0,-0.3)node[left,black]{$g=(1-2s)(1+\delta/2)$}--(2,-0.3);
    \draw[dotted, line width=.7pt] (0,0)--(1.4,-0.35) (1,-.5)--(1.5,0);
    \draw (0,-1.5)node[left]{$g=-2s(1+\delta/2)$}--(1,-0.5)to[out=45,in=166](1.4,-.35)--(2,-0.5)node[right]{$g=(1-2s)r$};
  \end{tikzpicture}
  \caption{The graph of $g(r)$; its graph remains below the dashed lines $g=(1-2s)r$ and $g=r-2s(1+\delta/2)$ and coincides with one of these lines when $r\not\in (1,1+\delta)$.}
  \label{fig:graph-of-g}
\end{figure}

To establish \ref{item:h5} we argue as follows; first the Hamiltonian function $G$ generating $\phi_{t} R_{-2st}$ is equal to:
$$G = \pi |z|^{2} - 2s (\mu_{\delta}(\pi |z|^{2} - 1) + 1)=g(\pi\abs{z}^{2});$$
see Figure \ref{fig:graph-of-g} for the graph of $g(r)$.

The $g$-coordinate of the intersection of the two dashed lines in Figure \ref{fig:graph-of-g} equals $(1-2s)(1+\delta/2)$. It follows that every orbit of $\phi_{t}R_{-2st}$ has action bounded from above by this amount, assuming of course that $1/2<s<1$, and hence:
\begin{equation*}
  c(\phi_{t}R_{-2st};1)\le (1-2s)(1+\delta/2),
\end{equation*}
which is what we wanted to show.
\end{proof}

\subsection{Comparison with the bound of Entov and Polterovich}
\label{sec:comp-with-bound}

It is shown in \cite{entov-poltero-IMRN-2003} that the spectral diameter of $\mathbb{C}P^{n}$ is equal to the symplectic area of the class of a line, which we normalize to be 1. It also holds that $\mathbb{C}P^{n}$ contains a symplectically embedded copy of $B(1)$; let us fix the standard embedding $e:B(1)\to \mathbb{C}P^{n}$ whose image is the complement of the hyperplane divisor. Then, for any Hamiltonian system $\psi_{t}$ with compact support in $B(1)$, we can implant $\psi_{t}$ as a compactly supported system $e\psi_{t}e^{-1}$ in $\mathbb{C}P^{n}$, and know that $\gamma_{\mathbb{C}P^{n}}(e\psi_{t}e^{-1})\le 1$.

It is therefore natural to wonder whether $\gamma_{B(1)}(\psi_{t})\le \gamma_{\mathbb{C}P^{n}}(e\psi_{t}e^{-1})$ holds, as such a relation would imply our result from \S\ref{sec:spectr-diam-ball}. However, this strategy does not work because:

\begin{prop}\label{prop:only-dim-1}
  There exist systems $\psi_{t}$ supported in $B(1)$ so that: $$\gamma_{\mathbb{C}P^{1}}(e\psi_{t}e^{-1})<\gamma_{B(1)}(\psi_{t});$$
  the difference between the two sides can be made arbitrarily close to 1.
\end{prop}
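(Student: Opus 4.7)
The plan is to construct an explicit autonomous Hamiltonian $K:\C\to\R$ supported in $B(1)$ whose time-$1$ flow has spectral norm close to $1$ in $B(1)$, but whose implantation in $\mathbb{C}P^1$ has support displaceable with arbitrarily small energy. The key asymmetry I will exploit is that $c(\psi_t;1)$ in the aspherical $\C$ is sensitive to $\max K$ via pointwise monotonicity, whereas in $\mathbb{C}P^1$ it is controlled, through Proposition~\ref{prop:displ-energy-bound}, by the displacement energy of the support of the diffeomorphism rather than of its generating Hamiltonian.

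For small $\epsilon,\eta\in(0,1/2)$, set $K(z)=f(\pi|z|^{2})$ where $f\in C^{\infty}([0,\infty))$ equals $1-\epsilon$ on $[0,1-\eta]$, vanishes on $[1,\infty)$, and decreases monotonically on $[1-\eta,1]$. Because $K$ is constant on the plateau $B(1-\eta)$ and vanishes outside $B(1)$, the flow $\psi_{t}$ is the identity on both regions, so its support as a Hamiltonian diffeomorphism is the closed annulus $\overline{A_{\eta}}=\overline{B(1)}\setminus B(1-\eta)$, which has symplectic area $\eta$.

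First I would bound $\gamma_{B(1)}(\psi_{t})$ from below by comparing $K$ to the Hamiltonian $H_{1,\eta',\delta'}$ from \S\ref{sec:spectr-diam-pack} with $\eta'\leq 1-\epsilon$. A direct inspection shows that $K\geq H_{1,\eta',\delta'}$ pointwise, and monotonicity of spectral invariants yields $c(\psi_{t};1)\geq c(\psi_{H_{1,\eta',\delta'}};1)=\eta'-\delta'/2$, which tends to $1-\epsilon$ as $\eta'\to 1-\epsilon$ and $\delta'\to 0$. A parallel monotonicity argument applied to $-K$ (whose maximum is $0$), using the asphericity of $\C$, gives $c(\psi_{t}^{-1};1)\to 0$, so $\gamma_{B(1)}(\psi_{t})$ can be made arbitrarily close to $1-\epsilon$.

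Next I would bound $\gamma_{\mathbb{C}P^{1}}(e\psi_{t}e^{-1})$ from above. In $\mathbb{C}P^{1}\cong S^{2}$, the set $\overline{A_{\eta}}$ is a closed topological disk of symplectic area $\eta$ (the annulus closes up at the puncture $p$). On the round sphere, a disk of area $\eta$ has angular radius $O(\sqrt{\eta})$, so a rotation of $S^{2}$ about an axis perpendicular to the axis through $p$ by any angle slightly exceeding twice this radius displaces $\overline{A_{\eta}}$ from itself. This rotation is generated by a multiple of the height function with Hofer (and hence spectral) norm $O(\sqrt{\eta})$, and Proposition~\ref{prop:displ-energy-bound} then gives $\gamma_{\mathbb{C}P^{1}}(e\psi_{t}e^{-1})\leq O(\sqrt{\eta})$. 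The resulting gap $(1-\epsilon)-O(\sqrt{\eta})$ tends to $1$ as $\epsilon,\eta\to 0$. The main obstacle is justifying the monotonicity-based lower bound despite the degeneracy of $K$ on its plateau; the reduction to the non-degenerate construction from \S\ref{sec:spectr-diam-pack} sidesteps this, but one must still verify the global pointwise domination $K\geq H_{1,\eta',\delta'}$ in the transition annulus, which constrains how close to $1-\epsilon$ the parameter $\eta'$ may be pushed.
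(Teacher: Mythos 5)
Your construction and strategy are essentially the paper's: the same radial bump with a large plateau and a thin transition annulus, a lower bound on $\gamma_{B(1)}(\psi_{t})$ close to $1$, and an upper bound on $\gamma_{\mathbb{C}P^{1}}(e\psi_{t}e^{-1})$ obtained by displacing the support of the \emph{system} (a disk of area $\eta$ around the point at infinity) with energy $O(\sqrt{\eta})$ via Proposition \ref{prop:displ-energy-bound}; the paper phrases this last step as $\inf\{\gamma(U):\mathbb{C}P^{n-1}\subset U\}=0$. Your quantitative $O(\sqrt{\eta})$ estimate for the displacing rotation is fine. (One point worth making explicit in the upper bound: the obvious generator $K\circ e^{-1}$ is \emph{not} supported near the point at infinity — it equals $1-\epsilon$ on the big disk — but $K\circ e^{-1}-(1-\epsilon)$ is, and has the same time-one map, so the displacement-energy bound does apply.)

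There is, however, a genuine gap in your lower bound. You need $\gamma_{B(1)}(\psi_{t})=c(\psi_{t};1)+c(\psi_{t}^{-1};1)\ge 1-\epsilon-o(1)$. Monotonicity against a bump $H_{1-\eta,\eta',\delta'}$ supported \emph{inside the plateau} $B(1-\eta)$ does give $c(\psi_{t};1)\ge 1-\epsilon-o(1)$ cleanly (and sidesteps the domination issue you flag in the transition annulus). But ``monotonicity applied to $-K$, whose maximum is $0$'' only yields the \emph{upper} bound $c(\psi_{t}^{-1};1)\le c(0;1)=0$, which points the wrong way: a priori $c(\psi_{t}^{-1};1)$ could equal $-(1-\epsilon)$ and destroy the estimate. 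A monotone comparison from below would require some $G\le -K$ with $c(G;1)$ known to be near $0$; since $-K\approx-(1-\epsilon)$ on all of $B(1-\eta)$, any such $G$ is itself (minus) a near-indicator bump whose invariant you can only compute by spectrality, so the shortcut is circular. The correct route — and what the paper means by ``a straightforward consideration of the spectrum'' — is the case analysis of \S\ref{sec:spectr-diam-pack}: taking $\eta<\epsilon$, the action spectrum of $\psi_{t}^{-1}$ consists of $0$, $-(1-\epsilon)$, and values of non-constant orbits in the transition annulus whose absolute value is at least $1-\eta>1-\epsilon$; spectrality together with $c(\psi_{t}^{-1};1)\le 0$ forces $c(\psi_{t}^{-1};1)\in\{0,-(1-\epsilon)\}$, and the value $-(1-\epsilon)$ is excluded because it would give $\gamma(\psi_{t})\le 0$, contradicting the non-degeneracy of the spectral norm. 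With that repair your argument coincides with the paper's proof.
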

We state the result only for the case $n=1$ as the argument is very simple in that case, although the authors expect a similar phenomenon holds in higher dimensions.\footnote{One approach to Proposition \ref{prop:only-dim-1} in higher dimensions is to exploit the fact that $\mathbb{C}P^{n-1}\subset \mathbb{C}P^{n}$ is \emph{stably displaceable} (see \cite[\S4.3]{gurel-CCM-2008}), and that the spectral invariants satisfy a K\"unneth formula (\cite[Theorem 5.1]{entov-polterovich-compositio-2009}); see \cite[\S3.6]{borman-JSG-2012} for a related discussion.}
\begin{proof}  
  Let $\psi_{t}$ be the Hamiltonian system generated by a radial bump function $H=f(\pi\abs{z}^{2})$ where $f$ is non-increasing, equals $1$ on $[0,1-\epsilon]$ and rapidly cuts off to zero so as to have compact support in $[0,1)$. A straightforward consideration of the spectrum shows that $\gamma_{B(1)}(\psi_{t})\ge 1-\epsilon$.

  On the other hand, by taking $\epsilon \to 0$, we arrange that $e\psi_{t}e^{-1}$ is supported in any chosen neighbourhood $U$ of the divisor $\mathbb{C}P^{n-1}$, which is a single point when $n=1$. The argument is finished by showing that:
  \begin{equation}\label{eq:spectral-diameter-symplectic}
    \inf\set{\gamma(U):\mathbb{C}P^{n-1}\subset U}=0;
  \end{equation}
  in words, the spectral diameter of small neighborhoods of the divisor is small. In the low-dimensional case $n=1$, \eqref{eq:spectral-diameter-symplectic} follows from the displacement energy bound of \S\ref{sec:displ-energy-bound}, completing the proof.
\end{proof}

\subsection{Spectral diameter of special balls in $\mathbb{C}P^{n}$}
\label{sec:spectr-diam-spec}

The proof of Theorem \ref{theorem:A} is based on the following two lemmas, in which we denote by the homology class in $M=\mathbb{C}P^{n}$ represented by the hyperplane by $\Gamma\in H_{2n-2}(M)$, the fundamental class by $[M]$ and the point class by $[\mathrm{pt}]$.
\begin{lemma}\label{lemma:A2}
  For any Hamiltonian system $\psi_{t}$ on $M=\mathbb{C}P^{n}$, we have:
  \begin{equation*}
	\gamma(\psi_{t}) + c(\varphi_{t}\psi_{t}; [M])-c(\varphi_{t}\psi_{t}; \Gamma)=1;
  \end{equation*}
  where $\varphi_{t}$ is the loop generated by $S_{a}$.
\end{lemma}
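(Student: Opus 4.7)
The plan is to combine three ingredients: the duality lemma (Lemma~\ref{lem:duality_lemma}) to recast $\gamma(\psi_t)$; the pair-of-pants sub-additivity of \S\ref{sec:sub-addit-spectr} applied in two opposite directions; and an elementary Morse--Bott computation of the spectral invariants of the loops $\varphi_t^{\pm 1}$.

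First I would record the concrete data. Since $\mathbb{C}P^n$ is rational and, under the duality pairing of \S\ref{sec:duality-isomorphism}, $[M]$ is paired only with $[\mathrm{pt}]=\Gamma^n$, Lemma~\ref{lem:duality_lemma} gives $c(\psi_t^{-1};[M])=-c(\psi_t;[\mathrm{pt}])$, hence
\[
  \gamma(\psi_t)=c(\psi_t;[M])-c(\psi_t;[\mathrm{pt}]).
\]
The Morse--Bott generator $S_a$ attains its maximum $1-a$ at $\mathbb{C}P^0$ and its minimum $-a$ along $\mathbb{C}P^{n-1}$; after perturbing the Morse--Bott minimum into honest critical points, the top-index critical point on $\mathbb{C}P^{n-1}$ represents the class $\Gamma$. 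Taking the perturbation to zero yields $c(\varphi_t;[M])=1-a$ and $c(\varphi_t;\Gamma)=-a$, and duality then gives $c(\varphi_t^{-1};[\mathrm{pt}])=a-1$.

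The heart of the argument is to establish the two identities
\[
  c(\varphi_t\psi_t;[M])=c(\psi_t;[\mathrm{pt}])+(1-a),\qquad c(\varphi_t\psi_t;\Gamma)=c(\psi_t;[M])-a.
\]
For the first identity the $\le$ direction follows from sub-additivity applied to $\varphi_t\psi_t=\varphi_t\cdot\psi_t$ with the pair-of-pants product $\Gamma\ast_{\mathrm{PP}}[\mathrm{pt}]=\Gamma^{n+1}=q[M]$, combined with the recapping shift $c(-;q\,\alpha)=c(-;\alpha)-1$; the $\ge$ direction follows from sub-additivity applied to $\psi_t=\varphi_t^{-1}\cdot(\varphi_t\psi_t)$ with $[\mathrm{pt}]\ast_{\mathrm{PP}}[M]=[\mathrm{pt}]$. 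The second identity is obtained analogously: the $\le$ direction from $\Gamma\ast_{\mathrm{PP}}[M]=\Gamma$, and the $\ge$ direction from the less obvious product $[\mathrm{pt}]\ast_{\mathrm{PP}}(q^{-1}\Gamma)=[M]$, together with the shift $c(-;q^{-1}\Gamma)=c(-;\Gamma)+1$.

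Substituting the duality expression for $\gamma(\psi_t)$ and these two identities into the left-hand side of the claim, every spectral invariant of $\psi_t$ cancels, leaving $(1-a)-(-a)=1$. The main technical obstacle is the $\ge$ step for $c(\varphi_t\psi_t;\Gamma)$: it requires decomposing classes in the Novikov-completed quantum homology using the formal inverse $q^{-1}\Gamma$. This is legitimate because the pair-of-pants product and spectral invariants extend to such recapped elements, but it requires careful bookkeeping of the recapping shifts.
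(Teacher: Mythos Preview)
Your argument is correct, but it takes a different route from the paper's proof. The paper applies the naturality transformation of \S\ref{sec:natur-transf} (Lemma~\ref{lem:nat}) for the loop $\varphi_{t}$ and its inverse: since the Seidel element of $\varphi_{t}$ sends $[M]\mapsto\Gamma$ and that of $\varphi_{t}^{-1}$ sends $[M]\mapsto[\mathrm{pt}]$, Lemma~\ref{lem:nat} gives the two identities
\[
  c(H_{t};[M])=c(S_{a}\#H_{t};\Gamma)+a,\qquad c(S_{a}\#H_{t};[M])=c(H_{t};[\mathrm{pt}])+1-a,
\]
as exact equalities in one stroke each; adding them and using $\gamma(\psi_{t})=c(\psi_{t};[M])-c(\psi_{t};[\mathrm{pt}])$ (which you also invoke) finishes the proof. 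You instead squeeze each of these equalities between two applications of the pair-of-pants sub-additivity, exploiting the quantum relation $\Gamma^{n+1}=q[M]$ and the Morse values $c(\varphi_{t};\Gamma)=-a$, $c(\varphi_{t};[M])=1-a$, $c(\varphi_{t}^{-1};[\mathrm{pt}])=a-1$. This is a valid alternative: what you are effectively re-proving is the general principle that sub-additivity becomes an equality when one factor is a Hamiltonian loop (equivalently, when one multiplies by an invertible Seidel element). The paper's naturality approach is shorter and avoids the explicit quantum products and $q^{\pm1}$-bookkeeping; your approach has the advantage of using only the triangle inequality and the product structure, without appealing to the chain-level naturality isomorphism.
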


Introduce $U=\set{S_{a}>0}$, i.e., $U$ is the interior of the standard special ball of capacity $1-a$. Then:
\begin{lemma}\label{lemma:A3}
  For any Hamiltonian system $\psi_{t}$ supported in $U$, we have:
  \begin{equation*}
    c(\varphi_{t} \psi_{t}; [M])-c(\varphi_{t}\psi_{t}; \Gamma)\ge a.
  \end{equation*}
  where $\varphi_{t}$ is the loop generated by $S_{a}$.  
\end{lemma}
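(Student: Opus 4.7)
The plan is to combine two estimates. First, since $\psi_t$ is supported in $U=\{S_a>0\}$ and the hyperplane $\mathbb{C}P^{n-1}=\{z_0=0\}$ satisfies $S_a|_{\mathbb{C}P^{n-1}}=-a<0$, we have $\mathbb{C}P^{n-1}\cap\mathrm{supp}(\psi_t)=\emptyset$, so $\psi_t$ acts as the identity on $\mathbb{C}P^{n-1}$. Combined with the fact that $\varphi_t$ fixes $\mathbb{C}P^{n-1}$ pointwise (as the Morse-Bott minimum of $S_a$), the composition $\varphi_t\psi_t$ admits a full Morse-Bott family of constant $1$-periodic orbits along $\mathbb{C}P^{n-1}$, each of action $-a$ with respect to constant cappings.

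After a small Morse-Bott perturbation, these orbits assemble into a subcomplex of $\mathrm{CF}(\varphi_t\psi_t)$ concentrated at action level approximately $-a$, which is isomorphic to the Morse complex of $\mathbb{C}P^{n-1}$. The inclusion into $\mathrm{HF}(\varphi_t\psi_t)\cong QH(\mathbb{C}P^n)$ sends the top class of $\mathbb{C}P^{n-1}$ to the hyperplane class $\Gamma$, yielding the upper bound $c(\varphi_t\psi_t;\Gamma)\leq -a$.

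For the other spectral invariant, I would apply the naturality transformation $\mathfrak{n}$ from Section \ref{sec:natur-transf}, associated to $\varphi_t$ with basepoint at $\mathbb{C}P^0$ and constant capping; since $S_a=1-a$ at $\mathbb{C}P^0$, the action shift is $1-a$. By Lemma \ref{lem:nat}, this gives $c(\varphi_t\psi_t;\mathfrak{n}(\alpha))=c(\psi_t;\alpha)+(1-a)$ for any $\alpha\in\mathrm{HF}(\psi_t)$. Identifying $\mathfrak{n}$ on homology as Seidel-type multiplication in $QH(\mathbb{C}P^n)$ and choosing $\alpha$ so that $\mathfrak{n}(\alpha)=[M]$, one obtains $c(\varphi_t\psi_t;[M])=c(\psi_t;\alpha)+(1-a)$; the support condition on $\psi_t$ then yields a lower bound $c(\psi_t;\alpha)\geq a-1$, producing $c(\varphi_t\psi_t;[M])\geq 0$. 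Combining this with the upper bound from the first step gives $c(\varphi_t\psi_t;[M])-c(\varphi_t\psi_t;\Gamma)\geq 0-(-a)=a$, as claimed.

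The main obstacle will be the second step: explicitly identifying the Seidel element of $\varphi_t$ in $QH(\mathbb{C}P^n)$ and carefully tracking action shifts and Novikov coefficients. Unlike the aspherical setting of $\mathbb{C}^n$ in Section \ref{sec:spectr-diam-ball}, where the analogous naturality transformation is action-preserving, here one must contend with non-trivial quantum contributions. Pinning down the correct class $\alpha\in\mathrm{HF}(\psi_t)$ with $\mathfrak{n}(\alpha)=[M]$ and verifying the bound $c(\psi_t;\alpha)\geq a-1$ from the support of $\psi_t$ requires a delicate analysis of the Seidel representation on $\mathbb{C}P^n$ and the behavior of spectral invariants under quantum multiplication.
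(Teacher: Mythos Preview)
Your two separate bounds $c(\varphi_t\psi_t;\Gamma)\le -a$ and $c(\varphi_t\psi_t;[M])\ge 0$ are both false in general, so the decomposition you propose cannot work.

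To see this for the first bound, recall the naturality identity established in the proof of Lemma~\ref{lemma:A2} (equation~\eqref{eq:A2-1}):
\[
  c(S_a\# H_t;\Gamma)=c(H_t;[M])-a.
\]
Thus your claim $c(\varphi_t\psi_t;\Gamma)\le -a$ is equivalent to $c(H_t;[M])\le 0$. But for a small positive bump function $H_t\ge 0$ supported in $U$ one has $c(H_t;[M])\approx \max H_t>0$, a contradiction. Symmetrically, equation~\eqref{eq:A2-2} gives $c(S_a\# H_t;[M])=c(H_t;[\mathrm{pt}])+1-a$, so your second bound is equivalent to $c(H_t;[\mathrm{pt}])\ge a-1$, which fails for a sufficiently negative bump function. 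Only the \emph{difference} of the two spectral invariants admits the desired estimate; the individual terms do not.

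The specific error in your step~1 is the assertion that the Morse--Bott orbits along $\mathbb{C}P^{n-1}$ assemble into a subcomplex whose top class represents $\Gamma$. Nothing prevents Floer trajectories from orbits of $\varphi_t\psi_t$ inside $U$ (which can have action strictly larger than $-a$) from hitting these divisor orbits; once $\psi_t$ is turned on, the chain representing $\Gamma$ will in general involve generators in $U$ with action exceeding $-a$. The identification ``top class of $\mathbb{C}P^{n-1}\mapsto\Gamma$'' holds for $\psi_t=\mathrm{id}$ but does not persist.

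The paper's argument avoids this by never estimating the two invariants separately. Instead it proves the single inequality $c(S_a\# H_t;[M])\ge c(H_t;[M])$, by first replacing $S_a$ with $K_a=\min\{S_a,0\}$ (monotonicity), and then deforming $K_a$ to $0$ while using the minimal Chern number $n+1$ to show that orbits near the divisor cannot contribute to the class $[M]$ during the deformation. Combining this with the naturality identity \eqref{eq:A2-1} immediately yields the difference bound.
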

From the above lemmas, it follows that the spectral diameter of $U$ is at most $1-a$.  Combining this upper bound with the lower bound furnished by Proposition \ref{prop:minimal-chern} yields Theorem \ref{theorem:A}.

In the proofs of both lemmas we allow ourselves to work with non-normalized Hamilltonians on $\mathbb{C}P^{n}$, using the conventions in \S\ref{sec:non-norm-hamilt}. 

\subsubsection{Proof of Lemma \ref{lemma:A2}}
\label{sec:proof-lemma-A2}

The proof is a simple application of the naturality transformation associated to the loop generated by $S_{a}$ and the inverse loop generated by $\bar{S}_{a}$.

The naturality transformation associated to $S_{a}$ sends the class represented by the maximum of a perfect Morse function to the class of the index $2n-2$ critical point. Consequently by Lemma \ref{lem:nat}: 
\begin{equation*}
  c(H_{t}; [M])=c(S_{a}\# H_{t}; \Gamma)+\mathrm{const},
\end{equation*}
for every Hamiltonian $H_{t}$ supported in $U$. The constant term is the action of a capped orbit of $S_{a}$ and can be determined by sending $H_{t}\to 0$, in which case it must equal $c(0;[M])-c(S_{a};\Gamma)$. Since $S_{a}$ can be perturbed to a perfect Morse function whose index $2n-2$ critical point has action $-a$ (see Figure~\ref{fig:schematic-Sa}), and the Hessian at this critical point is small enough, it follows that $c(0;[M])-c(S_{a};\Gamma)=a$, and hence the above equation becomes:
\begin{equation}\label{eq:A2-1}
  c(H_{t};[M])=c(S_{a}\# H_{t}; \Gamma)+a.
\end{equation}
The next stage of the argument is similar but is based instead on the naturality transformation generated by $\bar{S}_{a}=-S_{a}$. This naturality transformation sends the class of the maximum to the class of the minimum, and hence:
\begin{equation*}
  c(S_{a}\# H_{t}; [M])=c(\bar{S}_{a}\# S_{a}\# H_{t}; [\mathrm{pt}])+\mathrm{const}=c(H_{t}; [\mathrm{pt}])+\mathrm{const}.
\end{equation*}
The constant term can again be determined by sending $H_{t}\to 0$, in which case it equals $c(S_{a};[M])$ which equals $1-a$, i.e., the critical value of the maximum as shown in Figure \ref{fig:schematic-Sa}. Thus:
\begin{equation}\label{eq:A2-2}
  c(S_{a}\# H_{t}; [M])=c(H_{t}; [\mathrm{pt}])+1-a.
\end{equation}
To complete the proof add together \eqref{eq:A2-1} and \eqref{eq:A2-2}, for $H_{t}$ generating $\psi_{t}$. \hfill $\square$

\subsubsection{Proof of Lemma \ref{lemma:A3}}
\label{sec:proof-lemma-A3}

Let $U=\set{S_{a}>0}$, as in the statement, and suppose that $H_{t}$ is a Hamiltonian function supported in $U$ generating $\psi_{t}$.

For the purposes of the proof, we introduce two time-reparametrization operations on a Hamiltonian function $G_{t}$ which does not affect its time-$1$ map in the universal cover:
\begin{equation*}
  G_{t}^*:=\beta'(2t)G_{\beta(2t)}\text{ and }G_{t}^{**}:=\beta'(2t-1)G_{\beta(2t-1)}.
\end{equation*}
Here $\beta:\R\to [0,1]$ is a standard smooth cut-off so $\beta(t)=0$ for $t\le 0$ and $\beta(t)=1$ for $t\ge 1$; we require that $\beta'(t)$ is non-negative.

The significance of these operations is the following: $G_{t}^*$ is supported where $t\in (0,1/2)$ while $G_{t}^{**}$ is supported where $t\in (1/2,1)$.

Due to the fact that spectral invariants depend only on the time-1 map in the universal cover (and the average value of the Hamiltonian), it follows that for any homology class $\Pi$, we have:
\begin{equation*}
  c(S_{a}\# H_{t}; \Pi)=c(S_{a}^{*}\# H_{t}^{**}; \Pi).
\end{equation*}
Now introduce the piecewise smooth function $K_{a}=\min\set{S_{a},0}$, as shown in Figure \ref{fig:proof-A3}. Since $K_{a}$ is pointwise less than $S_{a}$, it follows by a standard continuation argument that, for any class $\Pi$, we have
\begin{equation*}
  c(K_{a}^{*}\# H_{t}^{**}; \Pi)\le c(S_{a}^{*}\# H_{t}^{**}; \Pi).
\end{equation*}
We note that $K_{a}^{*}$ is not smooth, but nonetheless the spectral invariant $c(K_{a}^{*}\# H_{t}^{**};\Pi)$ is well-defined via a limiting process, because of the Hofer continuity of spectral invariants.

The next stage of the argument is a deformation argument. Roughly speaking, the idea is to interpolate from $K_{a}$ to $0$ while keeping track of the indices and actions of the orbits during the process. To make this precise, we argue in a slightly ad hoc fashion.

 Fix $0<\epsilon<a$ and introduce the $s$-dependent family of functions:
\begin{equation*}
  T_{s}=\max\set{K_{a},sK_{a}-\epsilon},
\end{equation*}
Note that on the set $\set{K_{a}\ge -\epsilon}=\set{S_{a}\ge -\epsilon}$, $T_{s}=T_{1}$. First smooth $T_{1}$ on this set so as to make $T_{1}^{*}\# H_{t}^{**}$ have non-degenerate orbits on $\set{K_{a}\ge -\epsilon/2}$.

Then, for each $s$, smooth $T_{s}$ on $\set{K_{a}\le -\epsilon/2}$ so that the only orbits outside in $\set{K_{a}\le -\epsilon/2}$ are the Morse critical points of indices $\set{0,2,\dots,2n-2}$ located near the divisor $\set{K_{a}=-a}$.

\begin{enumerate}
\item\label{item:Ts1} orbits contained entirely in the region where $T_{s}^{*}\# H_{t}^{**}=T_{1}^{*}\# H_{t}^{**}$,
\item\label{item:Ts2} orbits whose Floer homology grading is in $$\set{0,2,\dots,2n-2},$$ located near the divisor $\mathbb{C}P^{n-1}$.
\end{enumerate}

Because the minimal Chern number of $\mathbb{C}P^{n}$ is $n+1$, it follows that the orbits of type \eqref{item:Ts2} never appear in a linear combination representing $[M]$, which has degree $2n$. Consequently $c(T_{s}^{*}\# H_{t}^{**}; [M])$ is independent of $s$, since it is valued in the $s$-independent nowhere dense spectrum of orbits of type \eqref{item:Ts1}. Note that $T_{0}^{*}\# H_{t}^{**}$ is $\epsilon$-close to $H_{t}^{**}$, and thus we conclude that:
\begin{equation*}
  c(K_{a}^{*}\# H_{t}^{**};[M])=c(T_{1}^{*}\# H_{t}^{**};[M])=c(H_{t};[M])+O(\epsilon).
\end{equation*}
Since $\epsilon$ was arbitrary we conclude $c(K_{a}^{*}\# H_{t}^{**};[M])=c(H_{t};[M])$.

Combining the two steps, with $\Pi=[M]$, we conclude:
\begin{equation}\label{eq:A3}
c(S_{a}\# H_{t}; [M]) = c(S_{a}^{*}\# H_{t}^{**}; [M]) \ge c(K_{a}^{*}\# H_{t}^{**}; [M]) = c(H_{t}; [M]).
\end{equation}
Next, we recall \eqref{eq:A2-1}, so that:
\begin{equation*}
  c(S_{a}\# H_{t}; [M])-c(S_{a}\# H_{t}; \Gamma)=c(S_{a}\# H_{t}; [M])-c(H_{t}; [M])+a\ge a,
\end{equation*}
where we use \eqref{eq:A3} in the final step. This completes the proof.\hfill$\square$

\begin{figure}[h]
  \centering
  \begin{tikzpicture}[scale=1.9]
    \draw (0,0)--(1,0) (0,-0.3) --+(0,1);
    \draw[line width=0.6pt] (0,-0.3)--(1,0.7);
    \draw[->] (1.4,0.2)--node[above]{Step 1}+(0.5,0);
    \node at (0.5,-0.5){$S_{a}$};
    \begin{scope}[shift={(2.3,0)}]
      \draw (0,0)--(1,0);
      \draw (0,-0.3) --+(0,1);
      \draw[dotted,line width=0.7pt] (0.3,0)--(1,0.3) (0.3,0)--(1,0.7);
      \draw[line width=0.6pt] (0,-0.3)--(0.3,0)--(1,0);
      \draw[->] (1.4,0.2)--node[above]{Step 2}+(0.5,0);
      \node at (0.5,-0.5){$K_{a}$};
    \end{scope}
    \begin{scope}[shift={(4.6,0)}]
      \draw (0,0)--(1,0);
      
      \draw (0,-0.3) --+(0,1);
      \draw[line width=0.6pt] (0,-0.05)--(0.25,-0.05)--(0.3,0)--(1,0);
      \node at (0.5,-0.5){$\max\{-\epsilon,K_{a}\}$};
    \end{scope}
  \end{tikzpicture}
  \caption{Deformation used in the proof of Lemma \ref{lemma:A3}.}
  \label{fig:proof-A3}
\end{figure}
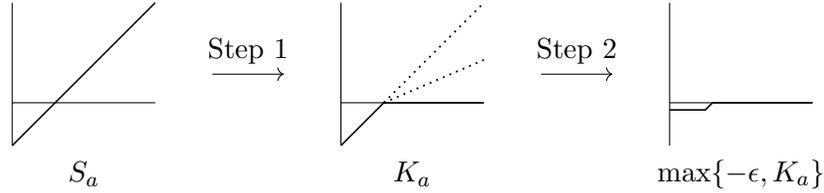

\bibliographystyle{alpha}
\bibliography{citations}

\begin{thebibliography}{HLRS16}

\bibitem[AAC23]{alizadeh-atallah-cant}
H.~Alizadeh, M.~S. Atallah, and D.~Cant.
\newblock Lagrangian intersections and the spectral norm in convex-at-infinity
  symplectic manifolds.
\newblock arXiv:2312.14752, 2023.

\bibitem[Bor12]{borman-JSG-2012}
M.~S. Borman.
\newblock Symplectic reduction of quasi-morphisms and quasi-states.
\newblock {\em J. Symplectic Geom.}, 10(2):225--246, 2012.

\bibitem[BP94]{bialy-polterovich-duke-1994}
M.~Bialy and L.~Polterovich.
\newblock Geodesics of {H}ofer's metric on the group of {H}amiltonian
  diffeomorphisms.
\newblock {\em Duke Math. J.}, 76(1):273--292, 1994.

\bibitem[CHK23]{cant-hedicke-kilgore}
D.~Cant, J.~Hedicke, and E.~Kilgore.
\newblock Extensible positive loops and vanishing of symplectic cohomology.
\newblock arXiv:2311.18267, November 2023.

\bibitem[CHLS07]{cieliebak-hofer-latschev-schlenk}
K.~Cieliebak, H.~Hofer, J.~Latschev, and F.~Schlenk.
\newblock Quantitative symplectic geometry.
\newblock In B.~Hasselblatt, editor, {\em Dynamics, Ergodic Theory and
  Geometry}, volume~54 of {\em MSRI Publications}, pages 1--44. Cambridge
  University Press, 2007.

\bibitem[DS93]{dostoglou_salamon}
S.~Dostoglou and D.~Salamon.
\newblock Instanton homology and symplectic fixed points.
\newblock {\em London Math. Soc. Lecture Note Ser.}, 192:53--93, 1993.

\bibitem[EH90]{ekeland-hofer-2}
I.~Ekeland and H.~Hofer.
\newblock Symplectic topology and {H}amiltonian dynamics {II}.
\newblock {\em Math. Z.}, 203:553--567, 1990.

\bibitem[EP03]{entov-poltero-IMRN-2003}
M.~Entov and L.~Polterovich.
\newblock Calabi quasimorphism and quantum homology.
\newblock {\em Int. Math. Res. Not.}, 2003:1635--1676, 2003.

\bibitem[EP09]{entov-polterovich-compositio-2009}
M.~Entov and L.~Polterovich.
\newblock Rigid subsets of symplectic manifolds.
\newblock {\em Compos. Math.}, 145(3):773--826, 2009.

\bibitem[FGS05]{frauenfelder-ginzburg-schlenk}
U.~Frauenfelder, V.~Ginzburg, and F.~Schlenk.
\newblock Energy capacity inequalities via an action selector.
\newblock In {\em Geometry, spectral theory, groups, and dynamics}, volume 387
  of {\em Contemp. Math.}, pages 129--152. Amer. Math. Soc., 2005.

\bibitem[FS07]{frauenfelder-schlenk}
U.~Frauenfelder and F.~Schlenk.
\newblock Hamiltonian dynamics on convex symplectic manifolds.
\newblock {\em Isr. J. Math.}, 159:1--56, 2007.

\bibitem[Gin05]{ginzburg-2005-weinstein}
V.~Ginzburg.
\newblock The {W}einstein conjecture and theorems of nearby and almost
  existence.
\newblock In {\em The Breadth of Symplectic and Poisson Geometry: Festschrift
  in Honor of Alan Weinstein}, Progr. Math., pages 139--172. Birkh{\"a}user,
  2005.

\bibitem[Gro85]{gromov-inventiones-1985}
M.~Gromov.
\newblock Pseudo holomorphic curves in symplectic manifolds.
\newblock {\em Invent. Math.}, 82:307--347, 1985.

\bibitem[GT23]{ganor-tanny-barricades}
Y.~Ganor and S.~Tanny.
\newblock {Floer} theory of disjointly supported {Hamiltonians} on
  symplectically aspherical manifolds.
\newblock {\em Alg. Geom. Topol.}, 23(2):645--732, 2023.

\bibitem[G{\"u}r08]{gurel-CCM-2008}
B.~Z. G{\"u}rel.
\newblock Totally non-coisotropic displacement and its applications to
  {H}amiltonian dynamics.
\newblock {\em Commun. Contemp. Math.}, 10(6):1103--1128, 2008.

\bibitem[Her98]{hermann-starshaped-preprint-1998}
D.~Hermann.
\newblock Non-equivalence of symplectic capacities for open sets with
  restricted contact type boundary.
\newblock Université Paris XI, Orsay, 1998.

\bibitem[HLRS16]{HLS-16}
V.~Humili{\`e}re, F.~Le~Roux, and S.~Seyfaddini.
\newblock Towards a dynamical interpretation of {H}amiltonian spectral
  invariants on surfaces.
\newblock {\em Geom. Topol.}, 20(4):2253--2334, 2016.

\bibitem[HZ94]{hofer-zehnder-book-1994}
H.~Hofer and E.~Zehnder.
\newblock {\em Symplectic Invariants and {H}amiltonian Dynamics}.
\newblock Birkh\"auhser Advanced Texts, 1994.

\bibitem[Ish16]{ishikawa-j-topol-anal-2016}
S.~Ishikawa.
\newblock Spectral invariants of distance functions.
\newblock {\em J. Topol. Anal.}, 08(4):655--676, 2016.

\bibitem[KS21]{kislev_shelukhin}
A.~Kislev and E.~Shelukhin.
\newblock Bounds on spectral norms and barcodes.
\newblock {\em Geom. Topol.}, 25:3257--3350, 2021.

\bibitem[LZ18]{leclercq-zapolsky}
R.~Leclercq and F.~Zapolsky.
\newblock Spectral invariants for monotone {L}agrangians.
\newblock {\em J. Topol. Anal.}, 10:627--700, 2018.

\bibitem[Mai22]{mailhot-preprint}
P-A. Mailhot.
\newblock The spectral diameter of a {L}iouville domain.
\newblock arXiv:2205.04618, 2022.

\bibitem[Oh05]{oh-2005-duke}
Y-G. Oh.
\newblock Spectral invariants, analysis of the {F}loer moduli space, and
  geometry of the {H}amiltonian diffeomorphism group.
\newblock {\em Duke Math. J.}, 126(1):199--295, 2005.

\bibitem[Pol01]{polterovich-book}
L.~Polterovich.
\newblock {\em The geometry of the group of symplectic diffeomorphisms}.
\newblock Birkh\"{a}user, 2001.

\bibitem[Sch95]{schwarz-thesis}
M.~Schwarz.
\newblock Symplectic {F}loer-{D}onaldson theory and quantum cohomology.
\newblock \url{https://www.math.uni-leipzig.de/~schwarz/diss.pdf}, 1995.

\bibitem[Sch00]{schwarz_spectral_invariants}
M.~Schwarz.
\newblock On the action spectrum for closed symplectically aspherical
  manifolds.
\newblock {\em Pacific J. Math.}, 193:419--461, 2000.

\bibitem[Sch05]{schlenk-05}
F.~Schlenk.
\newblock {\em Embedding problems in symplectic geometry}.
\newblock Walter de Gruyter, 2005.

\bibitem[Sei97]{seidel-representation-GAFA-1997}
P.~Seidel.
\newblock $\pi_1$ of symplectic automorphism groups and invertibles in quantum
  homology rings.
\newblock {\em Geom. funct. anal.}, 7:1046--1095, 1997.

\bibitem[Sei15]{seidel-eq-pop}
P.~Seidel.
\newblock The equivariant pair-of-pants product in fixed point {F}loer
  cohomology.
\newblock {\em Geom. Funct. Anal.}, 25:942--1007, 2015.

\bibitem[Sey15]{seyfaddini-spectral-killers-15}
S.~Seyfaddini.
\newblock Spectral killers and {P}oisson bracket invariants.
\newblock {\em J. Mod. Dyn.}, 9:51--66, 2015.

\bibitem[Tan22]{tanny22}
S.~Tanny.
\newblock A max inequality for spectral invariants of disjointly supported
  {H}amiltonians.
\newblock {\em J. Symp. Geom.}, 20(5):1159--1213, 2022.

\bibitem[Ush08]{usher08-spect-floer}
M.~Usher.
\newblock Spectral numbers in {F}loer theories.
\newblock {\em Compos. Math.}, 144:1581--1592, 2008.

\bibitem[Ush10a]{usher-duality}
M.~Usher.
\newblock Duality in filtered {F}loer-{N}ovikov complexes.
\newblock {\em J. Topol. Anal.}, 2(2):233--258, 2010.

\bibitem[Ush10b]{usher-CCM-2010}
M.~Usher.
\newblock The sharp energy-capacity inequality.
\newblock {\em Comm. Cont. Math}, 12(3):457--473, 2010.

\bibitem[Vit92]{viterbo92-GF}
C.~Viterbo.
\newblock Symplectic topology as the geometry of generating functions.
\newblock {\em Math. Ann.}, 292(1):685--710, 1992.

\end{thebibliography}
\end{document}